\icmltitlerunning{Decentralized Riemannian natural gradient methods with Kronecker-product approximations}
\newtheorem{thm}{Theorem}
\newtheorem{lem}{Lemma}
\newtheorem{defi}{Definition}
\newtheorem{assum}{Assumption}
\newtheorem{remark}{Remark}
\newcommand{\R}{\mathbb{R}}
\newcommand{\be}{\begin{equation}}
	\newcommand{\ee}{\end{equation}}
\newcommand{\en}{\begin{equation*}}
	\newcommand{\een}{\end{equation*}}
\newcommand{\eqn}{\begin{eqnarray}}
	\newcommand{\eeqn}{\end{eqnarray}}
\newcommand{\bmat}{\begin{bmatrix}}
	\newcommand{\emat}{\end{bmatrix}}
\newcommand{\btab}{\begin{tabular}}
	\newcommand{\etab}{\end{tabular}}
\newcommand{\iprod}[2]{\left \langle #1, #2 \right \rangle }
\newcommand{\E}{\mathbb{E}}
\DeclareMathOperator*{\argmin}{\text{argmin}}
\newcommand{\grad}{\operatorname{grad}}
\newcommand{\calM}{\mathcal{M}}
\newlength{\imgwidth}
\newcommand{\revise}[1]{#1}
\begin{document}
\onecolumn
\icmltitle{Decentralized Riemannian natural gradient methods with Kronecker-product approximations}

\icmlsetsymbol{equal}{*}

\begin{icmlauthorlist}
    \icmlauthor{Jiang Hu}{goo}
	\icmlauthor{Kangkang Deng}{to}
    \icmlauthor{Na Li}{to1}
	\icmlauthor{Quanzheng Li}{goo}
\end{icmlauthorlist}

\icmlaffiliation{to}{Beijing International Center for Mathematical Research, Peking University, China (freedeng1208@gmail.com)} 
\icmlaffiliation{to1}{School of Engineering and Applied Science, Harvard University, USA (nali@seas.harvard.edu)} 
\icmlaffiliation{goo}{Massachusetts General Hospital and Harvard Medical School, Harvard University, USA (hujiangopt@gmail.com, li.quanzheng@mgh.harvard.edu)}

\icmlcorrespondingauthor{Kangkang Deng}{freedeng1208@gmail.com}

\icmlkeywords{Machine Learning, ICML}

\vskip 0.3in



\printAffiliationsAndNotice{}  
\begin{abstract}
With a computationally efficient approximation of the second-order information, natural gradient methods have been successful in solving large-scale structured optimization problems. We study the natural gradient methods for the large-scale decentralized optimization problems on Riemannian manifolds, where the local objective function defined by the local dataset is of a log-probability type. By utilizing the structure of the Riemannian Fisher information matrix (RFIM), we present an efficient decentralized Riemannian natural gradient descent (DRNGD) method. To overcome the communication issue of the high-dimension RFIM, we consider a class of structured problems for which the RFIM can be approximated by a Kronecker product of two low-dimension matrices.  By performing the communications over the Kronecker factors, a high-quality approximation of the RFIM can be obtained in a low cost. We prove that DRNGD converges to a stationary point with the best-known rate of $\mathcal{O}(1/K)$. 
Numerical experiments demonstrate the efficiency of our proposed method compared with the state-of-the-art ones. To the best of our knowledge, this is the first Riemannian second-order method for solving decentralized manifold optimization problems. 
\end{abstract}

\section{Introduction}\label{sec:intro}
We focus on a class of optimization problems on Riemannian manifold with negative log-probability losses, namely,
\be \label{prob} \min_{\Theta \in \calM} \;\; \varphi(\Theta):=\frac{1}{N} \sum_{i=1}^N \left\{ \frac{1}{|\mathcal{D}_i|}\sum_{j\in \mathcal{D}_i} \underbrace{-\log p(y_j| f(x_j, \Theta))}_{=:\varphi_j(\Theta)} \right\}, \ee
where $\calM$ is an embedded submanifold of $\R^{n\times d}$ 
or a quotient manifold whose total space is an embedded submanifold of $\R^{n\times d}$, $N$ is the number of total agents and each agent $i$ holds the local dataset $\{(x_j,y_j)\}_{j \in \mathcal{D}_i}$ with cardinality $|\mathcal{D}_i|$, $f$ is a mapping from the input space to the output space, and $p(\cdot| f(x_j,\Theta))$ is the probability density function conditioning on $f(x_j, \Theta)$. For the decentralized (multi-agent) setting, problem \eqref{prob} is equivalently rewritten as the following
\be \label{prob:decen}
\begin{aligned}
\min \;\; & \frac{1}{N} \sum_{i=1}^N \left\{ \frac{1}{|\mathcal{D}_i|}\sum_{j\in \mathcal{D}_i} -\log p(y_j| f(x_j, \Theta_i)) \right\}, \\
\mbox{s.t.} \;\; & \Theta_1 =\Theta_2 = \cdots = \Theta_N,
\\ 
& \Theta_i \in \calM, \forall i=1,\ldots,N,
\end{aligned}
 \ee
where $\Theta_i$ denotes the local copy of the $i$-th agent.

When the conditional distribution is Gaussian, the function $\log p$ is the mean-square loss. If the conditional distribution is the multinomial distribution, the function $\log p$ is the cross-entropy loss. The equivalence between the negative log-probability loss and Kullback-Leibler divergence, and more connections between general distributions and loss functions can be found in \citep{martens2020}. Therefore, problem \eqref{prob} encompasses a wide range of optimization problems arising from machine learning, signal processing, and deep learning, see, e.g., the low-rank matrix completion \citep{boumal2015low,kasai2019riemannian}, the low-dimension subspace learning \citep{ando2005framework,mishra2019riemannian}, and the deep neural networks with batch/layer normalization \citep{cho2017riemannian,hu2022riemannian,ba2016layer}, where the network parameters lie on the product of the Grassmann manifolds and the Euclidean spaces.

\subsection{Literature review}
Decentralized optimization in the Euclidean space (i.e., $\calM = \R^{n\times d}$) has been extensively studied in the last decades. Decentralized (sub)-gradient descent (DGD) method \citep{tsitsiklis1986distributed,nedic2009distributed,yuan2016convergence} gives the simplest way to combine local gradient update and consensus update. The convergence therein shows DGD can not converge to a stationary point when fixed step sizes are used. To obtain exact convergence with fixed step sizes, the local historic information are investigated in various papers, such as the primal-dual methods, EXTRA \citep{shi2015extra}, DLM \citep{ling2015dlm}, exact diffusion \citep{yuan2018exact}, NIDS \citep{li2019decentralized}, and the gradient tracking-type methods \cite{xu2015augmented,qu2017harnessing,scutari2019distributed}. The convergence rate can be linear if the objective function is strongly convex. To accelerate the convergence, many decentralized second-order methods are developed based on the local Hessian information or its quasi-Newton approximations. Although the second-order methods \citep{mokhtari2016network,eisen2017decentralized,mokhtari2016dqm,zhang2021newton,li2020communication} show better numerical performances than the first-order ones, only linear convergence rate is proved for the strongly convex setting. The main bottleneck is the use of the linear consensus algorithm. By a finite-time consensus method, an adaptive Newton method is proposed in \citep{zhang2022distributed}. The global superlinear convergence is established by assuming strong convexity. Decentralized stochastic second-order methods with variance reduction are also investigated in \citep{zhang2022variance}.

Due to the nonlinearity \revise{and nonconvexity} of the manifold constraint, the usual Euclidean consensus step can not achieve consensus. \revise{The above decentralized second-order methods rely on the convexity of either the considered problem or its feasible domain, and then can not be applied due to the failure of achieving consensus.} By utilizing the geodesic distance, the Riemannian consensus is defined in \citep{tron2012riemannian}. However, the costly exponential map and vector transport are involved and an asymptotically infinite number of consensus steps is needed to obtain the convergence \citep{shah2017distributed}. Based on a penalized reformulation, a Riemannian gossip algorithm on the Grassmann manifold is proposed in \cite{mishra2019riemannian}. For the specific Stiefel manifold (i.e., $\calM = {\rm St}(n,d)$), a new Riemannian consensus is defined based on the Euclidean distance in \citep{chen2021local}. \revise{They show that  the consensus problem on the Stiefel manifold yields some kind of generalized convexity and the Riemannian gradient method converges linearly to achieve the consensus  on the manifold.} By applying a multi-step consensus algorithm, the decentralized Riemannian stochastic gradient method and decentralized Riemannian gradient tracking method are proposed in \citep{chen2021decentralized} with the convergence rate of $\mathcal{O}(1/K)$ to a stationary point. By defining an approximate augmented Lagrangian function, the authors \citep{wang2022decentralized} propose a decentralized gradient tracking method, which does not need multi-step consensus. 

\subsection{Our Contributions}\label{subsec:contri}
We present an efficient decentralized Riemannian natural gradient method for solving \eqref{prob}. The main contributions are summarized as follows.
\begin{itemize}
    \item We investigate the Kronecker-product approximation of the RFIM by the factorized form of the gradient. 
    Instead of directly communicating the RFIM, we apply the gradient-tracking type technique on the two Kronecker factors. The communication costs can be as cheap as the gradient tracking procedure due to the low dimensionality of the Kronecker factors. By utilizing a switching rule between the communicated RFIM and local RFIM, we can always guarantee the descent property of the local natural gradient direction. Then, a decentralized Riemannian natural gradient descent method is proposed. We summarize our approach in Figure \ref{fig:illu}. 
    
    \begin{figure}[!htp] 
    \centering
    \begin{tikzpicture}[scale = 1] 
    \draw[thick, fill=blue, fill opacity=0.3] 
    (1,-1) -- (1,1) -- (3,1) -- (3,-1) -- cycle; 
    \node at (2,0) {\tiny{$F \in \R^{(nd) \times (nd)}$}};
    \node at (3.3,0) {$\approx$};
    \draw[thick, fill=green, fill opacity=0.3] 
    (3.6,-0.6) -- (3.6,0.6) -- (4.8,0.6) -- (4.8,-0.6) -- cycle; 
    \node at (4.2,0) {\tiny{$A \in \R^{d \times d}$}};
    \node at (5.05,0) {$\otimes$};
    \draw[thick, fill=orange, fill opacity=0.3] 
    (5.3,-0.7) -- (5.3,0.7) -- (6.7,0.7) -- (6.7,-0.7) -- cycle;
    \node at (6,0) {\tiny{$B \in \R^{n\times n}$}};
    \node at (4,-1.2) {$\Downarrow$};
    \draw[thick] (4,-2.2) circle (0.4);
    \node at (4, -2.2) {\tiny{${\color{green} A_1}, {\color{orange} B_1}$}};
    \draw[thick] (2,-3.2) circle (0.5);
    \node at (2, -3.2) {\tiny{${\color{green} A_N}, {\color{orange} B_N}$}};
    \draw[thick] (6,-3.2) circle (0.4);
    \node at (6, -3.2) {\tiny{${\color{green} A_2}, {\color{orange} B_2}$}};
    \draw[thick] (5,-4.2) circle (0.4);
    \node at (5, -4.2) {\tiny{${\color{green} A_3}, {\color{orange} B_3}$}};
    \node at (3,-4.2) {$\ldots$};
    \draw[<->] (4.4,-2.4) -- (5.6,-3);
    \draw[<->] (2.5,-3.2) -- (5.6,-3.2);
    \draw[<->] (4.2,-2.6) -- (4.8,-3.8);
    \draw[<->] (2.4,-3.5) -- (4.6,-4.2);
    \draw[<->] (3.4,-4.2) -- (4.6,-4.2);
    \draw[<->] (3.3,-4.1) -- (5.6,-3.2);
    \end{tikzpicture}
    \caption{Illustrations of Kronecker-product approximation and decentralized Riemannian natural gradient method.}
    \label{fig:illu}
    \end{figure}
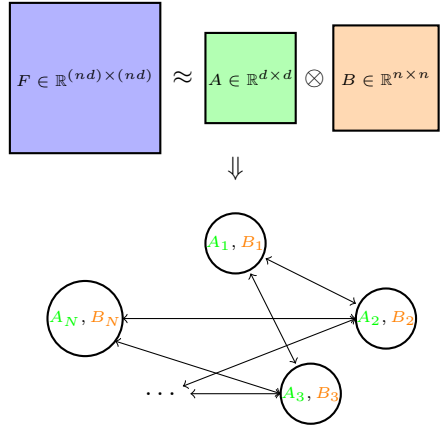
    
    \item The convergence of the proposed algorithm with rate of $\mathcal{O}(1/K)$ to a stationary point (Theorem \ref{thm:convergence}) is established for the cases of $\calM$ being the Stiefel manifold or the Grassmann manifold.  Compared to the analysis of the Riemannian gradient tracking method \citep{chen2021decentralized}, our result is applicable to any preconditioned direction with bounded curvatures and does not limit to the case of the Stiefel manifold. In particular, we show in Lemma \ref{lem:upper-bound-grass} that the Riemannian upper bound condition and the Lipschitz continuity of the Riemannian gradient hold for the Grassmann manifold, which is vital in analyzing the convergence. 
    \item Taking the low-rank matrix completion and the low-dimension subspace learning as examples, we detail the involved computational costs with a comparison with the gradient tracking. Numerical experiments (see Figure \ref{fig:sl:sarcos1} for an illustration) are performed to show the efficiency of the proposed algorithm compared with the Riemannian gradient tracking method.
\end{itemize}
\begin{figure}[!htb]
	\begin{center}
		\begin{minipage}[b]{0.48\linewidth}
			\centering
			\centerline{\includegraphics[width=\linewidth]{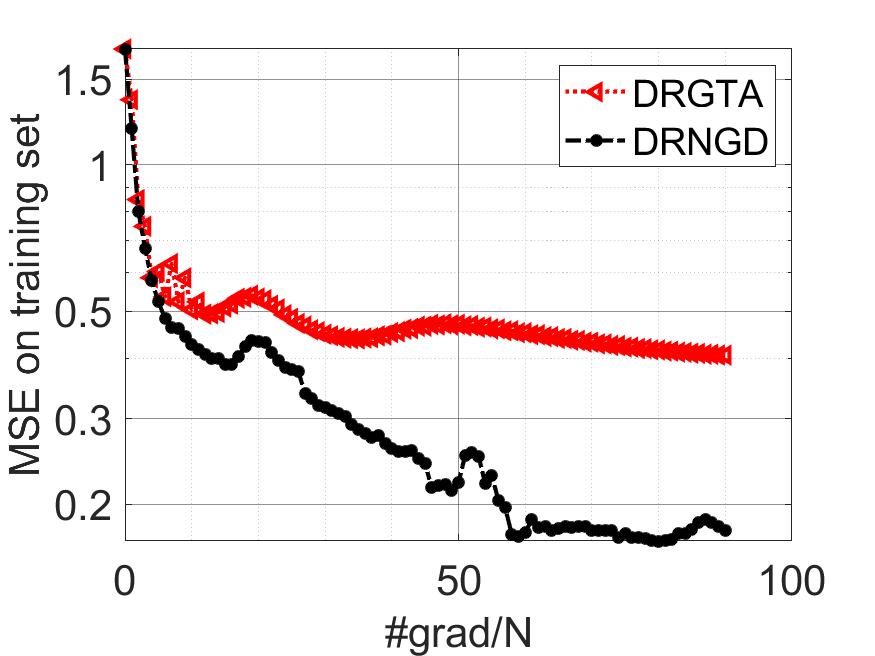}}
			\centerline{}\medskip
		\end{minipage}
		\begin{minipage}[b]{0.48\linewidth}
			\centering
			\centerline{\includegraphics[width=\linewidth]{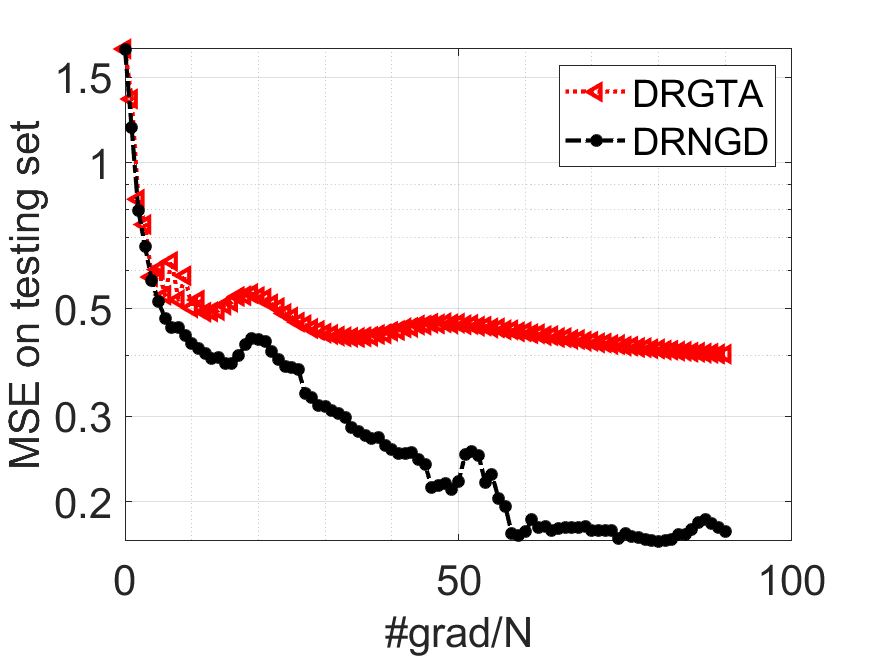}}
			\centerline{} \medskip
		\end{minipage}
	\end{center}
	\vskip -0.4in
	\caption{Numerical results on the low-dimension subspace learning problem with Sarcos dataset. ``DRNGD'' and ``DRGTA'' denote our decentralized Riemannian natural gradient method and the decentralized Riemannian gradient tracking method in \citep{chen2021decentralized}, respectively.}
	\label{fig:sl:sarcos1}
\end{figure}
\subsection{Notation}
For a positive integer $T$, we denote $[T]=\{1,2,\ldots, T\}$. For a matrix $\Theta \in \R^{n\times d}$, $\|\Theta \|$ is denoted as the Frobenius norm of a matrix $\Theta$. \revise{For two matrices $\Theta_1,\Theta_2\in \mathbb{R}^{n\times d}$, we denote the Euclidean product $\iprod{\Theta_1}{\Theta_2}: = \text{tr}(\Theta_1^{\top}\Theta_2)$, where $\text{tr}(A)$ is the sum of the diagonal elements of $A$.} For a real-valued function $h$, we use $\nabla h(\Theta)$ and $\grad h(\Theta)$ to denote the Euclidean and Riemannian gradient under the Euclidean metric, respectively. We use $\otimes$ to denote the Kronecker product. We write $\theta = {\rm vec}(\Theta) \in \R^{nd}$ as the column-wise vectorization of matrix $\Theta$. For simplicity, the vectorizations of $\nabla h(\Theta)$ and $\grad h(\Theta)$ are denoted by $\nabla h(\theta)$ and $\grad h(\theta)$, respectively. 

\section{Preliminaries}\label{sec:preli}
\subsection{Manifold optimization}
Manifold optimization is concerned with the following problem:
\[ \min_{\Theta \in \calM} \quad h(\Theta), \]
where $\calM$ is a Riemannian manifold with its total space included in $\R^{n\times d}$  and $h$ is a real-valued function. It has been vastly studied over the last decades, see, e.g., \citep{absil2009optimization,hu2020brief,boumal2020introduction}. A core concept in designing the Riemannian optimization algorithms is the retraction operator. Let $T_{\Theta} \calM$ be the tangent space at $\Theta$ to $\calM$, the retraction $R_{\Theta}$ is a mapping from $T_{\Theta} \calM$ to $\calM$ and satisfies the following two conditions:
\begin{itemize}
    \item $R_{\Theta}(0_{\Theta}) = \Theta$, where $0_{\Theta}$ is the zero element in $T_{\Theta} \calM$.
    \item $\frac{\rm d}{\rm dt} R_{\Theta}(t Q)|_{t=0} = Q$ for any $Q \in T_{\Theta}\calM$.
\end{itemize}
Taking the Stiefel manifold ${\rm St}(n,d) := \{\Theta \in \R^{n \times d}: \Theta^\top \Theta = I\}$ as an example, the polar decomposition $R_{\Theta}(Q):= UV^\top$ with $U,V$ being the left and right singular matrices of the matrix $\Theta + Q$ (i.e., there exists a diagonal matrix $S$ with positive entries on its diagonal such that $\Theta + Q = USV^\top$) is a retraction.  

In the $k$-th iteration, the update scheme of the Riemannian algorithm obeys the following form
\[ \Theta_{k+1} = R_{\Theta_k}(\alpha_k \eta_k),  \]
where $\eta_k$ is a descent direction in $T_{\Theta_k} \calM$ and $\alpha_k > 0$ is a step size. When the vectorization of $\Theta$, i.e., $\theta \in \R^{nd}$, is used, the notations $q \in T_{\theta} \calM$ and $R_{\theta}(q)$ mean that 
\[ {\rm mat}(q) \in T_{\Theta} \calM, \quad R_{\theta}(q) = {\rm vec}(R_{\Theta}({\rm mat}(q))),  \]
where ${\rm mat}(q)$ converts $q$ into a $n$-by-$d$ matrix and satisfies ${\rm vec}({\rm mat}(q)) = q$. Two specific manfiolds, the Stiefel manifold ${\rm St}(n,d) := \{\Theta \in \R^{n\times d}: \Theta^\top \Theta = I \}$ and the Grassmann manifold ${\rm Gr}(n,d) = \{ {\rm all~}d{\rm-dimension ~ subspaces ~in~} \R^n\}$, will be considered in this paper. More explanations on these two manifolds are presented in Appendix \ref{append:prem}.

\subsection{Riemannian Fisher information matrix and natural gradient methods}
The Fisher information matrix (FIM) and natural gradient method are proposed in \citep{amari1996neural}. Lately, the theoretical properties and numerical performances of the natural gradient methods and their generalizations are extensively studied; see, e.g., \citep{martens2015optimizing,martens2020,yang2022efficient}. When it comes to the manifold case, the authors \citep{hu2022riemannian} define the RFIM (i.e., Riemannian FIM) for \eqref{prob} 
by 
\be \label{eq:fim-org} \begin{aligned}
F(\theta):= & \frac{1}{N}\sum_{i=1}^N \left\{\frac{1}{|\mathcal{D}_i|} \sum_{j\in \mathcal{D}_i}
\mathbb{E}_{p(y|f(x_j,\theta))} \right. \\
& \left.\left[ \grad \log p(y|f(x_j,\theta)) \grad \log p(y|f(x_j,\theta))^\top \right] \right\},
\end{aligned}  \ee
which is a $(nd)$-by-$(nd)$ matrix. When $\calM$ is a submanifold of $\R^{n\times d}$, $F(\theta)$ coincides with the Riemannian Hessian by \citep{hu2022riemannian}. 
When the expectation $\mathbb{E}_{p(y|f(x_j,\theta))}$ is expensive to compute, the empirical RFIM \citep{hu2022riemannian} is defined by 
\[ \bar{F}(\theta):= \frac{1}{N}\sum_{i=1}^N \left[ \frac{1}{|\mathcal{D}_i|} \sum_{j\in \mathcal{D}_i} \grad \varphi_j(\theta) \grad \varphi_j(\theta)^\top \right]. \]
One can refer to \citep{martens2020,hu2022riemannian} for the comparisons of the empirical RFIM and the RFIM.  Both the empirical RFIM and the RFIM play important roles in the design of natural gradient methods \citep{martens2015optimizing,martens2020,hu2022riemannian}.


\subsection{Stationarity measure}
Let $\Theta_1, \ldots, \Theta_N \in \calM$ be the local copies of each agent, and denote the Euclidean mean by 
\[ \hat{\Theta} := \frac{1}{N} \sum_{i=1}^N \Theta_i. \]
The induced arithmetic mean on manifold \citep{sarlette2009consensus} is defined by
\[ \bar{\Theta} := \argmin_{y \in \calM} \;\; \sum_{i=1}^N\|y-\Theta_i\|^2 = P_{\calM}(\hat{\Theta}), \]
where $P_{\calM}$ is the orthogonal projection operator onto $\calM$. Then, the $\varepsilon$-stationary point is defined as follows.
\begin{defi}
The set of points $\{\Theta_1, \ldots, \Theta_N\} \subset \calM$ is called a $\varepsilon$-stationary point of \eqref{prob:decen} if 
\[ \frac{1}{N}\sum_{i=1}^N \| \Theta_i - \bar{\Theta}\|^2 \leq \varepsilon \quad {\rm and} \quad \|\grad \varphi(\bar{\Theta})\|^2 \leq \varepsilon. \] 
\end{defi}

\section{Decentralized Riemannian natural gradient method}


\subsection{Centralized Riemannian natural gradient descent method}
Assuming that $F(\theta)$ is positive definite, the Riemannian natural gradient direction $d(\theta)$ is defined as 
\be \label{eq:ng} d(\theta) := F(\theta)^{-1}\grad \varphi(\theta). \ee
Then, at the $k$-th iteration, the centralized Riemannian natural gradient descent method has the following update scheme,
\be \label{eq:ngd} \theta_{k+1} = R_{\theta_k}(-\beta d_k), \ee
with $d_k=d(\theta_k)$ and a step size $\beta > 0$. 
The success of natural gradient methods based on \eqref{eq:ng} owes to the cheap computations as well as the connections to the Hessian matrix.

Let us start with some basics of decentralized optimization before introducing the decentralized extension of the Riemannian natural gradient descent method. In the literature of decentralized optimization, the network topology of the agents is modeled by its adjacency matrix $W \in \R^{N\times N}$, which is nonnegative and symmetric. The agents $i$ and $j$ are connected if $w_{ij} > 0$ and otherwise not. In addition, $W$ is also assumed to be doubly stochastic, i.e., for any $i\in [N]$, $\sum_{j=1}^N w_{ij} =1$. The $i$-th agent can communicate with its neighboring agents in the network, i.e., any node $j$ with $w_{ij} > 0$. In the $k$-th iteration, let $\theta_{i,k}$ be the local iterate of the $i$-th agent. 
With the adjacency matrix $W$,
the gradient tracking method \citep{chen2021decentralized} constructs the following estimated gradient for the $i$-th agent
\be \label{eq:gt} y_{i,k} = \sum_{j=1}^N w^t_{ij} y_{i,k-1} + g_{i,k} - g_{i,k-1}, \ee
where $w^t_{ij}$ is the $(i,j)$-th element of $W^t$, $g_{i,k}:= \frac{1}{|\mathcal{D}_i|} \sum_{j \in \mathcal{D}_i} \grad \varphi_j(\theta_{i,k})$, $y_{i,k-1}$ denotes the estimated gradient used in the $(k-1)$-th iteration at the $i$-th agent, and $y_{i,0}$ is set to $g_{i,0}$. Then, the Riemannian gradient tracking method is given by 
\[ \theta_{i,k + 1} = R_{\theta_{i,k}}\left( \alpha P_{T_{\theta_{i,k}}\calM}\left(\sum_{j=1}^N w^t_{ij}\theta_{j,k} \right) -  \beta v_{i,k} \right),  \]
where $v_{i,k} = P_{T_{\theta_{i,k}}\calM}(y_{i,k})$, $\alpha > 0$ and $\beta > 0$ are the step sizes. 

Note that the centralized Riemannian natural gradient method \eqref{eq:ngd} relies on all local information to define $F(\theta)$. In the gradient tracking \eqref{eq:gt}, the communication matrix is a $nd$-dimension vector $y_{i,k}$. Due to the high dimensionality of $F(\theta) \in \R^{(nd) \times (nd)}$, it will be extremely costly to get an accurate approximation of $F(\theta)$ in the local agent if directly communicating $F(\theta)$ by a gradient tacking-type technique. We will present an approach to do efficient communication based on the Kronecker-product approximation of the RFIM.


\subsection{Communications of the RFIM based on the Kronecker-product approximation}
As defined in \eqref{eq:fim-org}, the RFIM $F(\theta)$ is an $(nd)$-by-$(nd)$ matrix. Due to its high dimensionality, direct communication on $F(\theta)$ is generally not affordable. Here, we consider a class of problems of form \eqref{prob} whose RFIM can be approximated by the Kronecker product of two smaller-scale matrices. Define $\varphi(\Theta;x,y) := -\log p(y|f(x,\Theta))$. As shown in \citep{martens2015optimizing,grosse2016kronecker,hu2022riemannian}, if the gradient $\nabla \varphi(\Theta;x,y)$ with  has the approximate decomposition
\be \label{eq:lowrank-grad} \nabla \varphi(\Theta; x, y) \approx B(\Theta; x,y) A(\Theta; x,y)^\top, \ee
with a positive integer $r$, $B(\Theta;x,y) \in \R^{n\times r}$, and $A(\Theta; x,y) \in \R^{d \times r}$, an approximate RFIM can be constructed by
\be \label{eq:RFIM-kron} F(\theta) \approx  A^F(\theta) \otimes [Q(\Theta) B^F(\theta) Q(\Theta)] \ee
with $A^F(\theta):= \frac{1}{N}\sum_{i=1}^N \left\{ \frac{1}{|\mathcal{D}_i|} \sum_{j \in \mathcal{D}_i} \E_{p_{y|f(x_j,\theta)}} [A(\Theta;x_j,y) \right. $ $ \left. A(\Theta;x_j,y)^\top] \right\}$ $\in$ $\R^{d \times d}$, $B^F(\theta):= \frac{1}{N}\sum_{i=1}^N \left\{ \frac{1}{|\mathcal{D}_i|} \right.  $ $ \left.\sum_{j \in \mathcal{D}_i} \E_{p_{y|f(x_j,\theta)}} [B(\Theta;x_j,y) B(\Theta;x_j,y)^\top] \right\}  \in \R^{n\times n}$, and $Q(\Theta) \in \R^{n\times n}$ is matrix representation of the projection operator $P_{T_{\Theta} \calM}$, i.e., $P_{T_{\Theta} \calM}(U) = Q(\Theta)U$ (e.g., $Q(\Theta) = I - \Theta \Theta^\top$ for $\calM = {\rm Gr}(n,d)$). One can validate the approximations \eqref{eq:lowrank-grad} and \eqref{eq:RFIM-kron} for the low-rank matrix completion, the low-dimension subspace learning, and the deep learning with normalization layers by following \citep{martens2015optimizing,grosse2016kronecker,hu2022riemannian}. 
 Similarly, by denoting $A_j(\Theta) = A(\Theta; x_j,y_j)$, the \revise{empirical RFIM} has the approximation
\be \label{eq:REFIM-kron} \bar{F}(\theta) \approx A^E(\theta) \otimes [Q(\Theta) B^E(\theta) Q(\Theta)^\top] \ee
with $A^E(\theta):= \frac{1}{N}\sum_{i=1}^N \left\{ \frac{1}{|\mathcal{D}_i|} \sum_{j \in D_i}[A_j(\Theta)A_j(\Theta)^\top]\right\} \in  \\  \R^{d \times d}$ and $B^E(\theta):= \frac{1}{N}\sum_{i=1}^N $ $\left\{ \frac{1}{|\mathcal{D}_i|} \sum_{j \in \mathcal{D}_i}  [B_j(\Theta) \right. $ $\left.B_j(\Theta)^\top]  \right\} \in \R^{n\times n}$. 
Such Kronecker-product approximations are crucial to developing a communication-effective Riemannian natural gradient method in the decentralized setting.

Let us explain how to perform the communication of RFIM based on the Kronecker-product approximation \eqref{eq:RFIM-kron}. It is also applicable to \eqref{eq:REFIM-kron}. With \eqref{eq:RFIM-kron}, one can communicate the $d$-by-$d$ and the $n$-by-$n$ Kronecker factors, i.e., $A^F(\theta)$ and $Q(\Theta)B^F(\Theta)Q(\Theta)$, instead of $F(\theta)$ itself. To be specific, in the $k$-th iteration, 
\begin{align}
\label{eq:fim-tracking-1} 
\bar{A}_{i,k} & = \sum_{j=1}^N w^t_{ij} \bar{A}_{j,k-1} + \hat{A}_{i,k} - \hat{A}_{i,k-1}, \\
\label{eq:fim-tracking-2} 
\bar{B}_{i,k} & = \sum_{j=1}^N w^t_{ij} \bar{B}_{j,k-1} + \hat{B}_{i,k} - \hat{B}_{i,k-1},
\end{align}
where $\hat{A}_{i,k}:= \frac{1}{|\mathcal{D}_i|}\sum_{j \in \mathcal{D}_i} \mathbb{E}_{p(y|f(x_j,\theta_{i,k}))} [A(\Theta_{j,k}; x_j, y) \\ A(\Theta_{j,k}; x_j, y)^\top] \in \R^{d\times d}$, $\hat{B}_{i,k}:= \frac{1}{|\mathcal{D}_i|}\sum_{j \in \mathcal{D}_i}$ $ \mathbb{E}_{p(y|f(x_j,\theta))}[B(\Theta_{j,k};x_j,y) B(\Theta_{j,k};x_j, y)^\top] \in \R^{n\times n}$, $\bar{A}_{i,0} := \hat{A}_{i,0}$, and $\bar{B}_{i,0} = \hat{B}_{i,0}$. We note that when $n \approx d$, the communication costs in \eqref{eq:fim-tracking-1} and \eqref{eq:fim-tracking-2} are approximately twice as those in the gradient tracking \eqref{eq:gt}. If $d \ll n$, we can only track $\bar{A}_i$ as in \eqref{eq:fim-tracking-1} and set $\bar{B}_{i,k} = \hat{B}_{i,k}$. One can only track $\bar{B}_i$ as in \eqref{eq:fim-tracking-2} and always set $\bar{A}_{i,k} = \hat{A}_{i,k}$ for the case of $n \ll d$. \revise{Here, ``$d\ll n$'' can be understood as $d \leq \kappa n$ with a small $\kappa$, say $\kappa =0.1$. The condition ``$d\ll n$'' or ``$n\ll d$'' is used to control the trade-off between the communication cost and the quality of the constructed RFIM. } \revise{ Let $|E|$ be the number of edges of the graph associated to $W$. For both cases, the communication cost in each iteration is $\mathcal{O}(|E|t\min\{n^2, d^2\})$, which is much less than those in the gradient tracking \eqref{eq:gt}, i.e., $\mathcal{O}(|E|tnd)$.}


\subsection{Our algorithm: DRNGD}



Note that $\bar{A}_{i,k}$ and $\bar{B}_{i,k}$ may not be positive semidefinite. Let $\lambda^A_{i,k}$ and $\lambda^B_{i,k}$ be the smallest eigenvalues of $\bar{A}_{i,k}$ and $Q(\Theta_{i,k})\bar{B}_{i,k}Q(\Theta_{i,k})$, respectively. To ensure the positive-definite property, we construct the RFIM for the $i$-th agent as follows:
\be \label{eq:local-FIM} 
 F_{i,k} =\begin{cases}
\bar{A}_{i,k} \otimes \check{B}_{i,k}, \; &{\rm if}~ \lambda^A_{i,k}, \lambda^B_{i,k} \geq \sqrt{\lambda}, \\
\tilde{A}_{i,k} \otimes \check{B}_{i,k}, \; & {\rm if}~ \lambda^A_{i,k} < \sqrt{\lambda} \leq \lambda^B_{i,k}, \\
\bar{A}_{i,k} \otimes \tilde{B}_{i,k}, \;& {\rm if}~  \lambda^B_{i,k} < \sqrt{\lambda} \leq \lambda^A_{i,k}, \\
\tilde{A}_{i,k} \otimes \tilde{B}_{i,k}, \; &{\rm if}~ \lambda^A_{i,k}, \lambda^B_{i,k} < \sqrt{\lambda},
\end{cases} \ee
where $\lambda > 0$ is small constant, $\tilde{A}_{i,k} = \hat{A}_{i,k} + \sqrt{\lambda} I$, $\tilde{B}_{i,k} = Q(\Theta_{i,k})\hat{B}_{i,k} Q(\Theta_{i,k})^\top + \sqrt{\lambda} I$, and $\check{B}_{i,k} = Q(\Theta_{i,k})\bar{B}_{i,k} Q(\Theta_{i,k})^\top$.
Then, the local natural gradient direction is given by 
\be \label{eq:ngd} d_{i,k} = P_{T_{\theta_{i,k}} \calM} (F_{i,k}^{-1} y_{i,k}). \ee
Correspondingly, the local update at the $i$-th agent is  \be \label{eq:dngd} \theta_{i,k + 1} = R_{\theta_{i,k}}\left( \alpha P_{T_{\theta_{i,k}}\calM}\left(\sum_{j=1}^N w^t_{ij}\theta_{j,k} \right) -  \beta d_{i,k} \right).  \ee
We present our DRNGD method in Algorithm \ref{alg:drngd}. In summary, in each iteration, we perform both gradient tracking and RFIM tracking. Then, a positive-definite local RFIM is defined based on the rule given by \eqref{eq:local-FIM}. Consequently, we have the local natural gradient direction and do the local Riemannian natural gradient update \eqref{eq:dngd}. 
\begin{algorithm}[H] 
    \caption{A decentralized Riemannian natural gradient descent (DRNGD) method for solving \eqref{prob}.}
    \begin{algorithmic}[1] 
	\STATE \textbf{Input: }{Initial point $\theta_0 \in \calM$, integer $t \geq 1$,  damping parameter $\lambda > 0$, $\alpha > 0$, and fixed step size $\beta$. Set $k=0$, $\theta_{i,0} = \theta_0$, $y_{i,0} = \frac{1}{|\mathcal{D}_i|} \sum_{j \in \mathcal{D}_i} \nabla \varphi_j(\theta_{i,k})$, $\bar{A}_{i,0} = \hat{A}_{i,0}$, and $\bar{B}_{i,0} = \hat{B}_{i,0}$ for all $i \in [N]$.}
	
	\FOR{$k= 0, 1,\ldots, K$}
	    \FOR{$i=1,2,\ldots, N$}
	    \STATE Compute $v_{i,k} = P_{T_{\theta_{i,k}} \calM}(y_{i,k})$.
	    
	    \STATE Compute the approximate RFIM $F_{i,k}$ by \eqref{eq:local-FIM} and $d_{i,k}$ as in \eqref{eq:ngd}.
	    
	    \STATE Update $\theta_{i,k+1}$ as in \eqref{eq:dngd}.
	    
	    \STATE Do Riemannian gradient tracking:
	    \[ y_{i,k+1} = \sum_{j=1}^N w^t_{ij} y_{j,k} + g_{i,k+1} - g_{i,k}. \]
	    
	    \STATE Track the RFIM:
	    
	    \IF{$n \ll d$}
	    
	    \STATE Update $\bar{A}_{i,k}$ as in \eqref{eq:fim-tracking-1} and set $\bar{B}_{i,k} = \hat{B}_{i,k}$.
	    
	    \ELSIF{$d \ll n$}
	        
	        \STATE Update $\bar{B}_{i,k}$ as in \eqref{eq:fim-tracking-2} and set $\bar{A}_{i,k} = \hat{A}_{i,k}$.
	    
	    \ELSE
	    \STATE Update $\bar{A}_{i,k}$ and $\bar{B}_{i,k}$ as in \eqref{eq:fim-tracking-1} and \eqref{eq:fim-tracking-2}, respectively.
	    \ENDIF
	    \ENDFOR
	    \ENDFOR
	 \end{algorithmic}
	 \label{alg:drngd}
\end{algorithm}
\subsection{Practical DRNGD for low-rank matrix completion and low-dimension subspace learning}
In this part, we will present two applications of form \eqref{prob}, whose RFIM and Riemannian EFIM yield the Kronecker-product approximations with $d \ll n$.

\textbf{Low-rank matrix completion.}
The goal of the low-rank matrix completion (LRMC) problem is to recover a low-rank matrix from an observed matrix $X\in\mathbb{R}^{n\times T}$. Let $\Omega$ be the set of indices of known entries in $X$, the rank-$d$ LRMC problem has the following form:
\begin{equation}\label{prob:lrmc-org}
    \min_{U\in \mathrm{Gr}(n,d), V\in\mathbb{R}^{d\times T}}\frac{1}{2T}\|\mathcal{P}_{\Omega}(UV - X)\|^2,
\end{equation}
where the operator $\mathcal{P}_{\Omega}$ is defined in an element-wise manner with $(\mathcal{P}_{\Omega}(X))_{ij} = X_{ij}$ if $(i,j)\in \Omega$ and 0 otherwise. As in \cite{hu2022riemannian}, define $v_i$ as 
\[ v_i = v(U; x_i) = \argmin_{v \in \R^d} \| P_{\Omega_{x_i}} (Uv - x_i) \|, \]
where $x_i$ is the $i$-th column of $X$ and the $j$-th element of $P_{\Omega_{x_i}}(s)$ is $s_j$ if $(j,i) \in \Omega$ and $0$ otherwise. Consider the case when the columns $\{x_j\}_{j=1}^N$ are partioned into $N$ agents. The $i$-th agent is with the dataset $\{x_j\}_{j\in \mathcal{D}_i}$ and $\mathcal{D}_1 \cup \cdots \cup \mathcal{D}_N = [T]$.  
The LRMC problem can be reformulated as 
\be \label{prob:lrmc} \min_{U\in \mathrm{Gr}(n,d)}\frac{1}{2N} \sum_{i=1}^N \left[ \frac{1}{|\mathcal{D}_i|} \sum_{j \in \mathcal{D}_i} \|\mathcal{P}_{\Omega_{x_j}}(Uv(U;x_j) - x_j)\|^2 \right]. \ee

By following the analysis in \citep[Subsection 4.2.1]{hu2022riemannian}, the RFIM can be efficiently approximated by 
\[ \left(\frac{1}{N}\sum_{i=1}^N \left[ \frac{1}{|\mathcal{D}_i|} \sum_{j \in \mathcal{D}_i} v(U; x_j) v(U; x_j)^\top \right] \right) \otimes (I-UU^\top).   \]
Note that the projection matrix $Q(U)$ for the Grassmann manifold is $Q(U) = I - UU^\top$. We further have $\hat{A}_i = \frac{1}{|\mathcal{D}_i|} \sum_{j \in \mathcal{D}_i} v(U; x_j) v(U; x_j)^\top $ and $\hat{B}_i = I$.  
Hence, we only need to communicate the $d$-by-$d$ matrix $v(U; x_i) v(U; x_i)^\top$. The inverse of $F_{i,k}$ can be obtained via computing the inverse of a $d$-by-$d$ matrix, which will be extremely cheap when the rank $d$ is smaller.

\textbf{Low-dimension subspace learning.}
In multi-task learning \cite{ando2005framework,mishra2019riemannian},
different tasks are assumed to share the same latent low-dimensional feature
representation. Specifically, suppose that the $i$-th task  has the 
training set $X_i\in \R^{d_i\times n}$ and the corresponding label set $y_i\in
\R^{d_i}$ for $i=1,\ldots, T$. Let $\{X_j, y_j\}_{i \in \mathcal{D}_i}$ be the local dataset of the $i$-th agent and $\mathcal{D}_1 \cup \cdots \cup \mathcal{D}_N = [T]$.
The multi-task feature learning problem can then be formulated as
\begin{equation}
	\label{prob:subspace-learning}
	\min_{U \in \text{Gr}(n,d)}  \frac{1}{2N}\sum_{i=1}^N \left[ \frac{1}{|\mathcal{D}_i|} \sum_{j \in \mathcal{D}_i} \|X_j U v(U; X_j, y_j) - y_j\|^2 \right],
\end{equation}
where $v(U; X_j, y_j) = \arg\min_{v}  \frac{1}{2} \|X_j U v - y_j\|^2 + \mu \|v\|^2$ and $\mu > 0$ is a regularization parameter.

Following the analysis in \citep[Subsection 4.2.2]{hu2022riemannian}, we have 
\[ \begin{aligned}
\hat{A}_i & = \frac{1}{|\mathcal{D}_i|}  \sum_{j \in \mathcal{D}_i} v_j v_j^\top,\\
 \hat{B}_i & = \frac{1}{|\mathcal{D}_i|} \sum_{j \in \mathcal{D}_i} (I-UU^\top) X_jX_j^\top (I - UU^\top) . 
\end{aligned}
  \]
The matrix $\hat{A}_i$ is $d$-by-$d$ and can be communicated with low costs and we do not communicate $X_iX_i^\top$ due to its high dimensionality as well as privacy concerns.

\subsection{Convergence analysis} \label{subsec:con}

\begin{assum}
	\label{assum:graph}
	The symmetric mixing matrix $W$ is doubly stochastic and satisfies $W_{i j} \geq 0$ and $1>W_{i i}>0$ for all $i, j \in [N]$. The second largest singular value $\sigma_2$ belongs to $(0,1]$. 
\end{assum}

\begin{assum} \label{assum:obj}
The losses $\varphi(\Theta;x,y)$ are continuously differentiable and with $L_g$-Lipschitz continuous gradient. In addition, the mappings $A(\Theta; x,y)$ and $B(\Theta; x, y)$ are also Lipschitz continuous with moduli $L_a$ and $L_b$. The manifold $\calM$ is the Stiefel manifold ${\rm St}(n,d)$ or the Grassmann manifold ${\rm Gr}(n,d)$. The retraction $R$ used in Algorithm \ref{alg:drngd} is the polar decomposition. 

\end{assum}

With the above assumption, we denote $C_g:= \max_{\Theta \in {\rm St}(n,d)} \; \|\nabla \varphi(\Theta; x, y)\|$, $C_a := \max_{\Theta \in {\rm St}(n,d)}$ $\| A(\Theta; x, y)\|$, and $C_b:=\max_{\Theta \in {\rm St}(n,d)} \; \|B(\Theta; x, y)\|$. To prove the convergence, we need to establish the following Riemannian upper bound inequality and the Lipschitz continuity of the Riemannian gradient on the Grassmann manifold. The proof is given in Appendix \ref{append:lemma}. We note that similar conclusions on the Stiefel manifold are investigated in \citep{chen2021decentralized}. 
\begin{lem} \label{lem:upper-bound-grass}
Suppose that $\calM$ is the Grassmann manifold ${\rm Gr}(n,d)$ and  Assumption \ref{assum:obj} holds, we have
\be \label{eq:upbound-grass} \varphi(U) \leq \varphi(V) + \iprod{\grad \varphi(V)}{U-V} + \frac{\hat{L}}{2} \|U-V\|^2, \ee
for any $U,V\in {\rm St}(n,d)$ and $\hat{L} = L_g + C_g$. In addition, the Riemannian gradient $\grad \varphi(V) = (I-VV^\top) \nabla \varphi(V)$ is Lipschitz continuous with modulus $\tilde{L} = L_g + 2C_g$. 
\end{lem}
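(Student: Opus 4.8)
The plan is to derive both statements from their Euclidean counterparts by carefully accounting for the projection $I-VV^\top$ that converts the Euclidean gradient into the Riemannian one. For the upper bound I would start from the standard Euclidean descent lemma, which the $L_g$-Lipschitz continuity of $\nabla\varphi$ supplies:
\[
\varphi(U) \le \varphi(V) + \iprod{\nabla\varphi(V)}{U-V} + \frac{L_g}{2}\|U-V\|^2 .
\]
Writing $\nabla\varphi(V) = \grad\varphi(V) + VV^\top\nabla\varphi(V)$ with $\grad\varphi(V) = (I-VV^\top)\nabla\varphi(V)$, the whole problem reduces to showing the cross term satisfies
\[
\iprod{VV^\top\nabla\varphi(V)}{U-V} \le \frac{C_g}{2}\|U-V\|^2 ;
\]
adding this to the Euclidean inequality immediately produces the claimed constant $\hat L = L_g + C_g$.

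The crux, and the step I expect to be the main obstacle, is the quadratic bound on this cross term. Setting $E := U-V$ and $G := \nabla\varphi(V)$, I would rewrite $\iprod{VV^\top G}{E} = \iprod{V^\top G}{V^\top E}$. A naive Cauchy--Schwarz bound here is only linear in $\|E\|$, because the skew-symmetric part of $V^\top E$ is of order $\|E\|$; the quadratic estimate must therefore exploit two facts. First, since $\varphi$ is well defined on ${\rm Gr}(n,d)$, it is invariant under $V \mapsto VO$ for orthogonal $O$, and differentiating this invariance along $V\exp(tS)$ with $S$ skew-symmetric shows that $V^\top\nabla\varphi(V)$ is symmetric. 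Consequently $\iprod{V^\top G}{V^\top E} = \iprod{V^\top G}{\operatorname{sym}(V^\top E)}$, so only the symmetric part of $V^\top E$ contributes. Second, the Stiefel constraints $U^\top U = V^\top V = I$ give $V^\top E + E^\top V = -E^\top E$, i.e. $\operatorname{sym}(V^\top E) = -\tfrac12 E^\top E$, which is genuinely of order $\|E\|^2$. Combining these with $\|V^\top G\| \le \|G\| \le C_g$ and $\|E^\top E\| \le \|E\|^2$ yields $|\iprod{V^\top G}{V^\top E}| \le \frac{C_g}{2}\|U-V\|^2$, as required.

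For the Lipschitz continuity of $\grad\varphi$ I would simply decompose
\[
\grad\varphi(U) - \grad\varphi(V) = (I-UU^\top)\big(\nabla\varphi(U)-\nabla\varphi(V)\big) + (VV^\top - UU^\top)\nabla\varphi(V),
\]
and bound the two pieces separately, measuring the operator norm by $\|\cdot\|_2$ while $\|\cdot\|$ stays the Frobenius norm. The first piece is controlled by $\|I-UU^\top\|_2 = 1$ together with the $L_g$-Lipschitz Euclidean gradient, giving $L_g\|U-V\|$. For the second, the identity $VV^\top - UU^\top = V(V-U)^\top + (V-U)U^\top$ and $\|U\|_2 = \|V\|_2 = 1$ give $\|VV^\top - UU^\top\|_2 \le 2\|U-V\|$, and since $\|\nabla\varphi(V)\| \le C_g$ this piece is at most $2C_g\|U-V\|$. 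Adding the two bounds gives the modulus $\tilde L = L_g + 2C_g$. Note that no invariance is needed for this second part; only the norm bounds and the Euclidean Lipschitz property are used, so it is essentially routine, and the whole difficulty of the lemma is concentrated in the symmetry argument for the cross term above.
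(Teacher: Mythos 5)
Your proposal is correct and follows essentially the same route as the paper's proof: both derive the symmetry of $V^\top\nabla\varphi(V)$ from the orthogonal invariance of $\varphi$, use it together with $U^\top U = V^\top V = I$ to reduce the cross term to $-\tfrac12\iprod{V^\top\nabla\varphi(V)}{(U-V)^\top(U-V)}$ and bound it by $\tfrac{C_g}{2}\|U-V\|^2$, and both use the identical decomposition $(I-UU^\top)(\nabla\varphi(U)-\nabla\varphi(V)) + (VV^\top-UU^\top)\nabla\varphi(V)$ with $\|VV^\top-UU^\top\|\le 2\|U-V\|$ for the Lipschitz claim. No substantive differences.
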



Before presenting our convergence theorem, we introduce the following notations. Let $\mathbf{\Theta}_k:= [\Theta_{1,k}^\top; \Theta_{2,k}^\top, \ldots, \Theta_{N,k}^\top]^\top \in \R^{(Nn) \times d}$ and $\bar{\mathbf{\Theta}}_k = \mathbf{1}_N \otimes \bar{\Theta}_k$ with \revise{an} $N$-dimension vector $\mathbf{1}_N$ of all entries $1$. As the linear consensus on the Stiefel manifold relies on the initial consensus error \citep{chen2021local}, we define the neighborhood $\mathcal{N} := \mathcal{N}_1 \cap \mathcal{N}_2$ with $\mathcal{N}_1:=\{\mathbf{\Theta}: \|\mathbf{\Theta} - \bar{\mathbf{\Theta}}\|^2 \leq N \delta_1^2 \}$ and $\mathcal{N}_2 := \{ \mathbf{\Theta}: \max_{i \in [N]} \|\Theta - \bar{\Theta}\| \leq \revise{\delta_2}\}$ with $\delta_1, \delta_2$ satisfying $\delta_1 \leq \frac{1}{5\sqrt{d}} \delta_2$ and $\delta_2 \leq \frac{1}{6}$. We have the following result to a stationary point with proofs in Appendix \ref{append:thm}.
\begin{thm} \label{thm:convergence}
 Suppose that Assumptions \ref{assum:graph}, and \ref{assum:obj} hold. Let $\mathbf{\Theta}_0 \in \mathcal{N}, t \geq\left\lceil\log _{\sigma_2}\left(\frac{1}{2 \sqrt{N}}\right)\right\rceil, 0<\alpha \leq \bar{\alpha}$ for some $\bar{\alpha} \in (0,1]$, and $\beta \in (0, \bar{\beta})$ with $\bar{\beta} > 0$ depending on $\sigma_2, t, L_G, C_g, C_a, C_b, \delta_1, \delta_2$, $\alpha$, $\gamma_1$, and $\gamma_2$. 
Then, for the sequences generated by Algorithm \ref{alg:drngd}
$$
\begin{aligned}
& \min _{k \leq K} \frac{1}{N}\left\|\mathbf{\Theta}_k-\bar{\mathbf{\Theta}}_k\right\|^2 \leq \mathcal{O}(\frac{\beta}{K}), \\
& \min_{k \leq K}\left\|\grad \varphi\left(\bar{\Theta}_k\right)\right\|^2 \leq \mathcal{O}(\frac{1}{\beta K}),
\end{aligned}
$$
where the constants in $\mathcal{O}$ are related to $\alpha$, $\sigma_2, t$, $\varphi(\bar{\Theta}_0),$ $\inf_{\Theta \in \calM} \varphi(\Theta)$,  $L_g, C_g, L_a, L_b, C_a, C_b, \lambda, \delta_1$, and $\delta_2$. 
\end{thm}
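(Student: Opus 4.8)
The plan is to build a Lyapunov (potential) function that couples the objective value at the mean iterate $\bar{\Theta}_k$, the consensus error $\|\mathbf{\Theta}_k - \bar{\mathbf{\Theta}}_k\|^2$, and the gradient-tracking error, and to show it decreases by an amount of order $\beta\|\grad\varphi(\bar{\Theta}_k)\|^2$ per step. First I would establish the regularity of the preconditioner. The switching rule \eqref{eq:local-FIM} together with the damping $\sqrt{\lambda}I$ guarantees that every branch of $F_{i,k}$ is positive definite; combined with the uniform bounds $C_a,C_b$ on the Kronecker factors this yields two-sided spectral bounds $\mu_{\min}I \preceq F_{i,k} \preceq \mu_{\max}I$ with $\mu_{\min},\mu_{\max}$ depending only on $\lambda,C_a,C_b$. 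Because $F_{i,k}$ inherits the projection structure $Q(\Theta_{i,k})$ in the Stiefel/Grassmann case, the direction $d_{i,k}=P_{T_{\theta_{i,k}}\calM}(F_{i,k}^{-1}y_{i,k})$ satisfies bounded-curvature inequalities of the form $\iprod{v_{i,k}}{d_{i,k}} \geq \mu_{\max}^{-1}\|v_{i,k}\|^2$ and $\|d_{i,k}\| \leq \mu_{\min}^{-1}\|v_{i,k}\|$, so $d_{i,k}$ is a genuine descent direction uniformly across iterations and agents. This realizes the ``preconditioned direction with bounded curvature'' promised in the contributions.

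Next I would set up two contraction recursions. For the consensus error I would use Assumption \ref{assum:graph} and the choice $t \geq \lceil\log_{\sigma_2}(1/(2\sqrt{N}))\rceil$ to ensure $\sigma_2^t \leq 1/(2\sqrt{N})$, so that the projected Euclidean consensus step contracts linearly on the manifold; here I invoke the local consensus geometry of \citep{chen2021local}, which is valid only while the iterates remain in the neighborhood $\calN$. This produces a recursion $\|\mathbf{\Theta}_{k+1}-\bar{\mathbf{\Theta}}_{k+1}\|^2 \leq \rho\|\mathbf{\Theta}_k-\bar{\mathbf{\Theta}}_k\|^2 + C\beta^2\sum_i\|d_{i,k}\|^2$ with $\rho<1$. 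A parallel recursion for the tracking error $\|\mathbf{y}_k-\bar{\mathbf{y}}_k\|^2$ follows from the same contraction together with the $L_g$-Lipschitz continuity of the gradients and the Lipschitz continuity of $A,B$ (Assumption \ref{assum:obj}), the forcing term being controlled by $\|\mathbf{\Theta}_{k+1}-\mathbf{\Theta}_k\|^2$ and hence by $\alpha^2$-consensus and $\beta^2$-direction terms.

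Then I would derive the descent inequality for $\varphi(\bar{\Theta}_k)$. Applying Lemma \ref{lem:upper-bound-grass} along the update of the mean iterate, the leading term is $-\beta\iprod{\grad\varphi(\bar{\Theta}_k)}{\tfrac{1}{N}\sum_i d_{i,k}}$; using that $\bar{y}_k$ equals the averaged gradient and the bounded-curvature lower bound, this dominant term is negative of order $-\beta\|\grad\varphi(\bar{\Theta}_k)\|^2$, while the remaining terms are quadratic in the consensus and tracking errors and in $\beta\|d_{i,k}\|$. Finally I would combine the three estimates into a potential $\Phi_k = \varphi(\bar{\Theta}_k) + c_1\|\mathbf{\Theta}_k-\bar{\mathbf{\Theta}}_k\|^2 + c_2\|\mathbf{y}_k-\bar{\mathbf{y}}_k\|^2$, choosing $c_1,c_2$ and a threshold $\bar{\beta}$ (depending on $\sigma_2,t,L_g,C_g,C_a,C_b,\delta_1,\delta_2,\alpha$) so that all cross terms are absorbed and $\Phi_{k+1} \leq \Phi_k - c_3\beta\|\grad\varphi(\bar{\Theta}_k)\|^2 - c_4\|\mathbf{\Theta}_k-\bar{\mathbf{\Theta}}_k\|^2$. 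Telescoping from $0$ to $K$ and dividing by $K$ then yields both stated bounds, the consensus bound carrying the factor $\beta$ and the stationarity bound the factor $1/\beta$.

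The hard part will be two intertwined issues. First, proving the invariance $\mathbf{\Theta}_k \in \calN$ for every $k$ by induction, so that the nonconvex consensus contraction of \citep{chen2021local} remains applicable throughout; this is where the restrictions $\delta_1 \leq \delta_2/(5\sqrt{d})$, $\delta_2 \leq 1/6$, the bound $\alpha \leq \bar{\alpha}$, and the smallness of $\beta$ are consumed. Second, controlling the discrepancy between the preconditioned average direction $\tfrac{1}{N}\sum_i d_{i,k}$ and $\grad\varphi(\bar{\Theta}_k)$, since the per-agent preconditioners $F_{i,k}$ differ and each projection $P_{T_{\theta_{i,k}}\calM}$ is taken at a base point distinct from $\bar{\Theta}_k$; quantifying this mismatch by the consensus and tracking errors, and ensuring the resulting error terms are absorbable into the negative terms of $\Phi_k$, is the technical crux that dictates the step-size constraints.
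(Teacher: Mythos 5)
Your route is essentially the paper's: two-sided spectral bounds on the local preconditioner, the multi-step consensus contraction on $\mathcal{N}$ from \citep{chen2021local}, a gradient-tracking error recursion, a descent inequality for $\varphi(\bar{\Theta}_k)$ in which the mismatch between $\frac{1}{N}\sum_i d_{i,k}$ and the preconditioned average gradient is charged to consensus and tracking errors, and a final summation. The only structural difference is cosmetic: the paper does not assemble a single potential $\Phi_k$ with tuned coefficients $c_1,c_2$, but instead sums the descent inequality over $k$ and separately telescopes the recursions for $\sum_k\|\mathbf{y}_k-\hat{\mathbf{G}}_k\|^2$ and $\sum_k\|\mathbf{\Theta}_k-\bar{\mathbf{\Theta}}_k\|^2$ (via Lemma~2 of \citep{xu2015augmented}); the two bookkeeping devices are equivalent. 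Your identification of the two ``hard parts'' (invariance of $\mathcal{N}$ and the direction mismatch) matches where the paper spends its effort.

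There is one genuine gap: you assert that $\mu_{\min}I\preceq F_{i,k}\preceq \mu_{\max}I$ follows from the damping plus ``the uniform bounds $C_a,C_b$ on the Kronecker factors.'' The lower bound does follow from the switching rule \eqref{eq:local-FIM}, but the factors actually used are the \emph{tracked} matrices $\bar{A}_{i,k},\bar{B}_{i,k}$ of \eqref{eq:fim-tracking-1}--\eqref{eq:fim-tracking-2}, which accumulate mixing steps and increments across iterations; nothing a priori bounds $\|\bar{A}_{i,k}\|$ by $C_a^2$. The paper closes this with a second tracking-type recursion, $\|\bar{A}_{i,k+1}-\hat{A}_{i,k}\|\le \frac{1}{2}\|\bar{A}_{i,k}-\hat{A}_{i,k-1}\|+2L_aC_a\|\Theta_{i,k+1}-\Theta_{i,k}\|$, plus induction for small $\beta$, to get $\|\bar{A}_{i,k}\|\le\kappa_a$ (and similarly for $\bar{B}_{i,k}$); this is exactly where the Lipschitz moduli $L_a,L_b$ of Assumption~\ref{assum:obj} enter, whereas your outline uses the Lipschitz continuity of $A,B$ only in the recursion for $\mathbf{y}$. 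Without this step your $\mu_{\max}$ (the paper's $\kappa_F$), and hence the coefficient of $-\beta\|\hat{g}_k\|^2$ in the descent inequality, is uncontrolled. A secondary caveat: the inequality $\iprod{v_{i,k}}{d_{i,k}}\ge\mu_{\max}^{-1}\|v_{i,k}\|^2$ is not literal because $d_{i,k}=P_{T_{\theta_{i,k}}\calM}(F_{i,k}^{-1}y_{i,k})$ and the projection does not commute with $F_{i,k}^{-1}$; the paper handles this by splitting off the normal-space component of $F_{i,k}^{-1}y_{i,k}$ inside its term $s_2$ and absorbing it into the $\mathcal{O}(\beta^2)$ remainders, a detail your argument should make explicit.
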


\begin{remark}\revise{
Our analysis can deal with the cases of $\calM$ being either the Stiefel manifold or the Grassmann manifold, while the analysis in \citep{chen2021decentralized} is only applicable to the Stiefel manifold case. For the case of the Grassmann manifold,  the key is to establish the Riemannian upper bound condition and Lipschitz continuity of the Riemannian gradient in Lemma \ref{lem:upper-bound-grass}.}

\revise{
It is worth mentioning that the convergence result in Theorem \ref{thm:convergence} is established with respect to a natural gradient direction instead of the negative gradient direction used in \citep{chen2021decentralized}. Our main effort is to show the descent properties of the used natural gradient descent direction, including the positive definiteness of the preconditioned matrix (i.e., $F_{i,k} + \lambda I$) and the boundedness of $\| F_{i,k} \|$. Then, we establish the global convergence with respect to a general descent direction (including the method in \citep{chen2021decentralized} as a special case), which could serve as a basis for analyzing other decentralized Riemannian second-order methods. }
\end{remark}
\begin{figure*}[!htb]
	\begin{center}
		\begin{minipage}[b]{0.30\linewidth}
			\centering
			\centerline{\includegraphics[width=\linewidth]{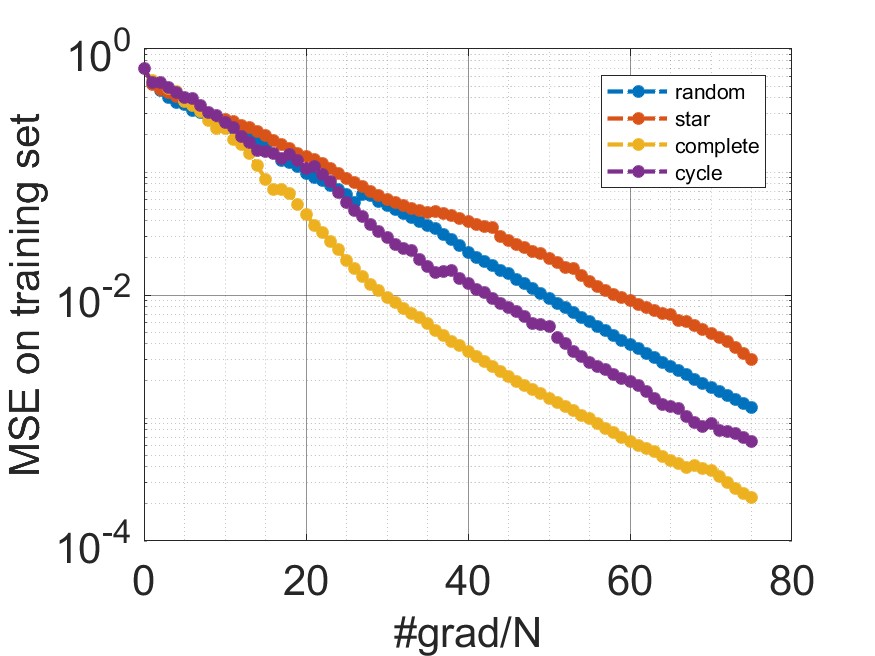}}
			\centerline{}\medskip
		\end{minipage}
		\hspace{0.2cm}
		\begin{minipage}[b]{0.30\linewidth}
			\centering
			\centerline{\includegraphics[width=\linewidth]{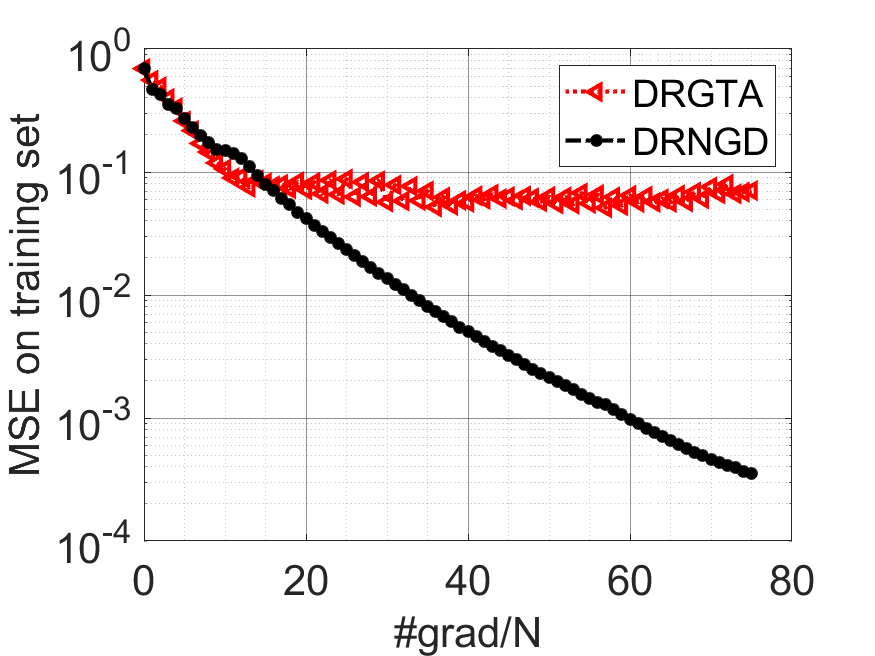}}
			\centerline{}\medskip
		\end{minipage}
		\hspace{0.2cm}
		\begin{minipage}[b]{0.30\linewidth}
			\centering
			\centerline{\includegraphics[width=\linewidth]{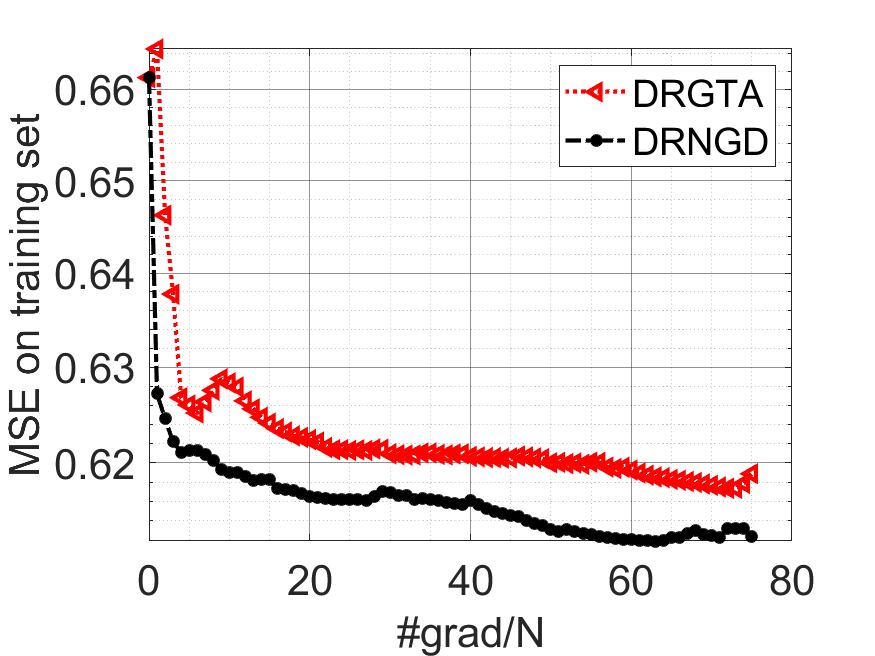}}
			\centerline{}\medskip
		\end{minipage}
		\\
		\vskip -0.2in
		\begin{minipage}[b]{0.30\linewidth}
			\centering
			\centerline{\includegraphics[width=\linewidth]{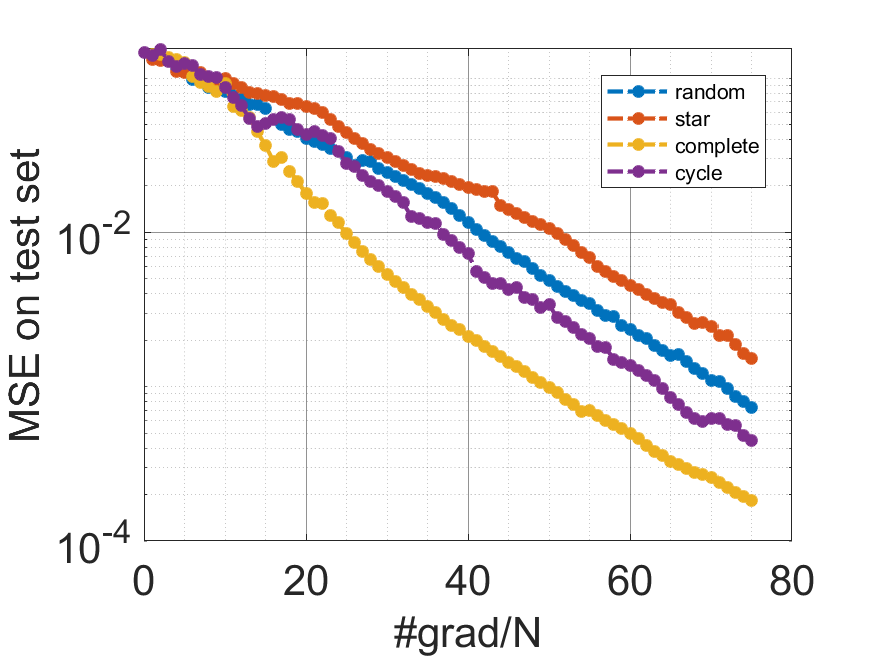}}
			\centerline{} \medskip
			\vskip -0.3in
			\caption{Convergence performance of DRNGD for the subspace learning problem with different graphs.}
			\label{fig:sl-graph}
		\end{minipage}
		\hspace{0.2cm}
		\begin{minipage}[b]{0.30\linewidth}
			\centering
			\centerline{\includegraphics[width=\linewidth]{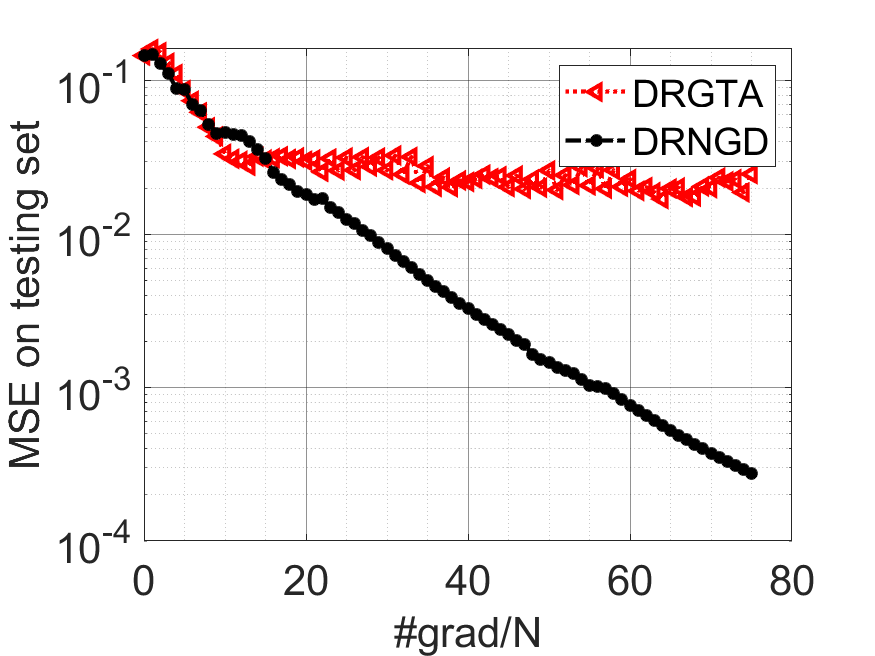}}
			\centerline{} \medskip
			\vskip -0.3in
			\caption{Convergence performance of DRNGD for the subspace learning problem with random dataset.}
			\label{fig:sl-random}
		\end{minipage}
		\hspace{0.2cm}
		\begin{minipage}[b]{0.30\linewidth}
			\centering
			\centerline{\includegraphics[width=\linewidth]{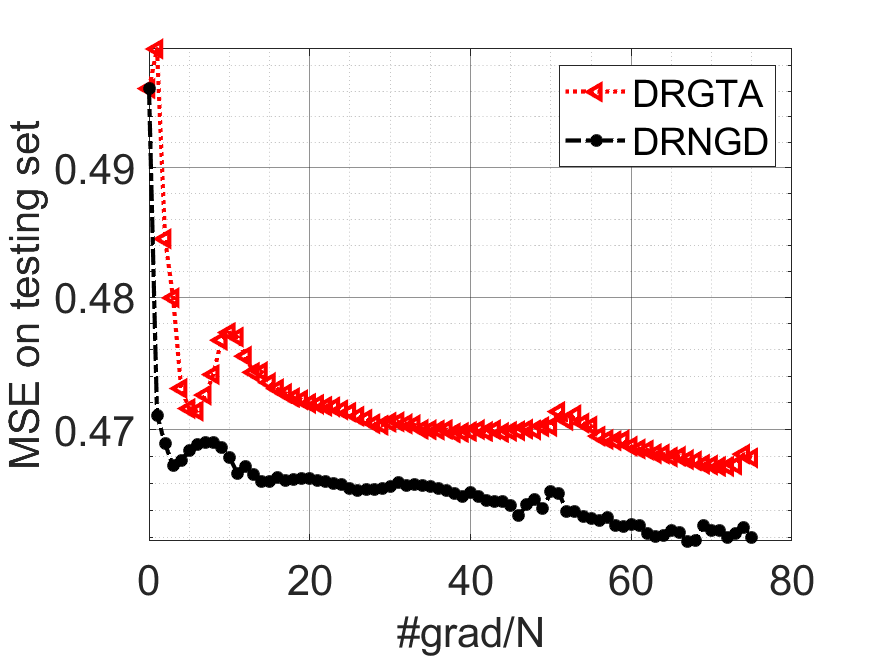}}
			\centerline{} \medskip
			\vskip -0.3in
			\caption{Convergence performance of DRNGD for the subspace learning problem with School dataset.}
			\label{fig:sl-school}
		\end{minipage}
	\end{center}
	
\end{figure*}
\section{Numerical experiments}
In this section, our proposed DRNGD in Algorithm \ref{alg:drngd} and the decentralized Riemannian gradient tracking algorithm (DRGTA) proposed in \cite{chen2021decentralized} are compared  on low-dimension subspace learning  and LRMC problems. Our implementations are based on the Manopt toolbox \cite{boumal2014manopt}. For both algorithms, we set $\alpha =1$ and $t=1$ (as different $t$'s lead to similar performances of DRGTA \citep{chen2021decentralized}). The initial point is randomly generated. Fixed step sizes are employed, and a grid search technique is utilized to identify the optimal values. The codes were written in MATLAB and run on a standard PC with 3.00 GHz AMD R5 microprocessor and 16GB of memory.



\subsection{Low-dimension subspace learning}

In this subsection, we evaluate the performance of DRNGD on the multi-task learning problem \eqref{prob:subspace-learning} on different benchmarks. 
We generate a random dataset as in \citep{mishra2019riemannian}.  The training instances $X_i$ are generated according to the Gaussian distribution with zero mean and unit standard deviation. A feature subspace $U^*$ for the problem instance is generated as a random point on $U_*\in \mathrm{St}(n,d)$. The weight vector $v_i$ for task $i$ is generated from the Gaussian distribution with zero mean and unit variance. The labels for training instances for task $i$ are computed as $y_i = X_iU_*U_*^Tv_i$. The labels $y_i$ are subsequently perturbed with a random mean zero Gaussian noise with $10^{-6}$ standard deviation.  

\paragraph{The effect of network topology.}
We test the performance of our algorithm in different network topologies:  star graph, ring graph, random graph, and complete graph.  We consider a toy problem instance with agents of $N = 4$. The $i$-th agent has one task and the number of training instances $d_i$ in the task is randomly chosen between 10 and 50. The input space dimension is $n = 100$. The dimension of feature subspace $U^*$ is $d = 6$.  We perform  80/20-train/test partitions.  Figure \ref{fig:sl-graph} shows the training mean square error (MSE) and testing MSE \citep{mishra2019riemannian} of our algorithm  over different network topologies. It can be seen that the performance in complete topology is better than the performance in other topologies. Overall, our algorithm can achieve similar performances in all topologies.



\begin{figure*}[!htb]
	\begin{center}
		\begin{minipage}[b]{0.22\linewidth}
			\centering
			\centerline{\includegraphics[width=\linewidth]{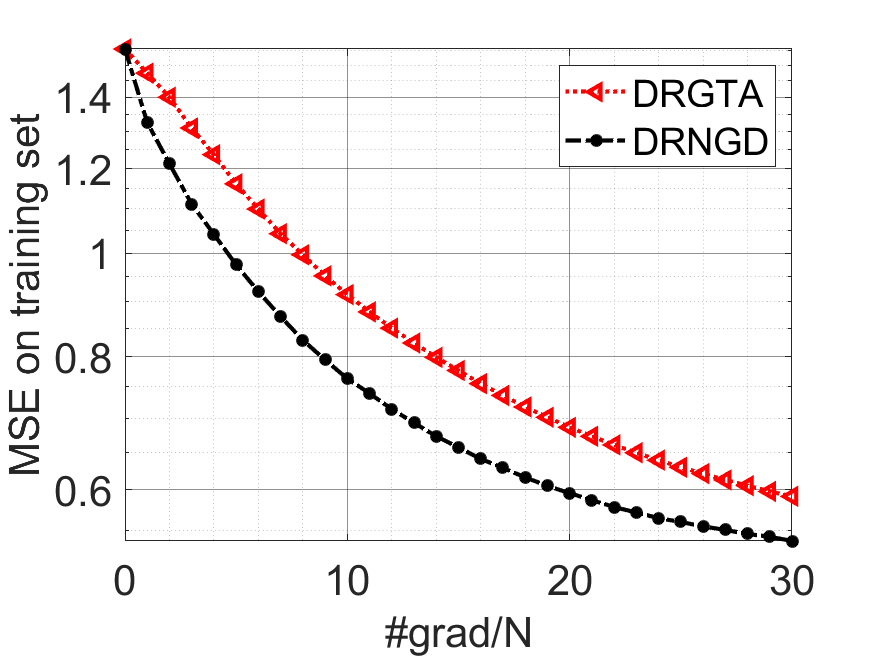}}
			\centerline{}\medskip
		\end{minipage}
  \begin{minipage}[b]{0.22\linewidth}
			\centering
			\centerline{\includegraphics[width=\linewidth]{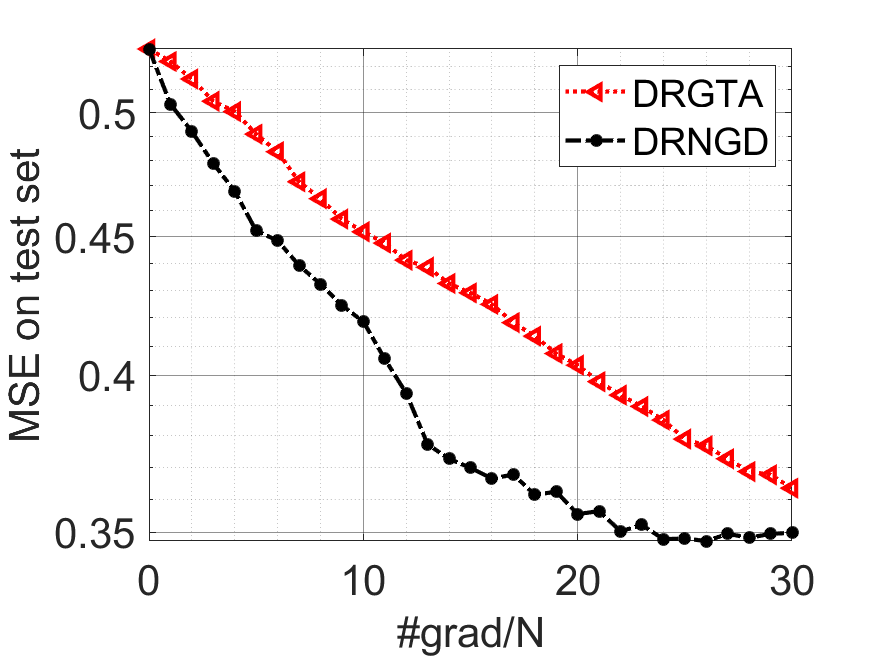}}
			\centerline{} \medskip
		\end{minipage}
	\begin{minipage}[b]{0.22\linewidth}
			\centering
			\centerline{\includegraphics[width=\linewidth]{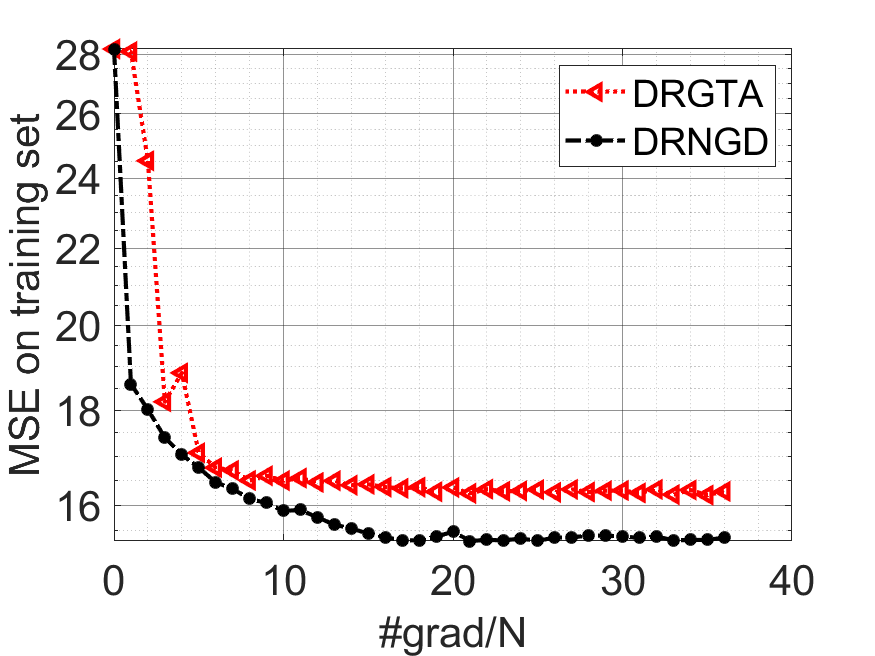}}
			\centerline{}\medskip
		\end{minipage}
		\begin{minipage}[b]{0.22\linewidth}
			\centering
			\centerline{\includegraphics[width=\linewidth]{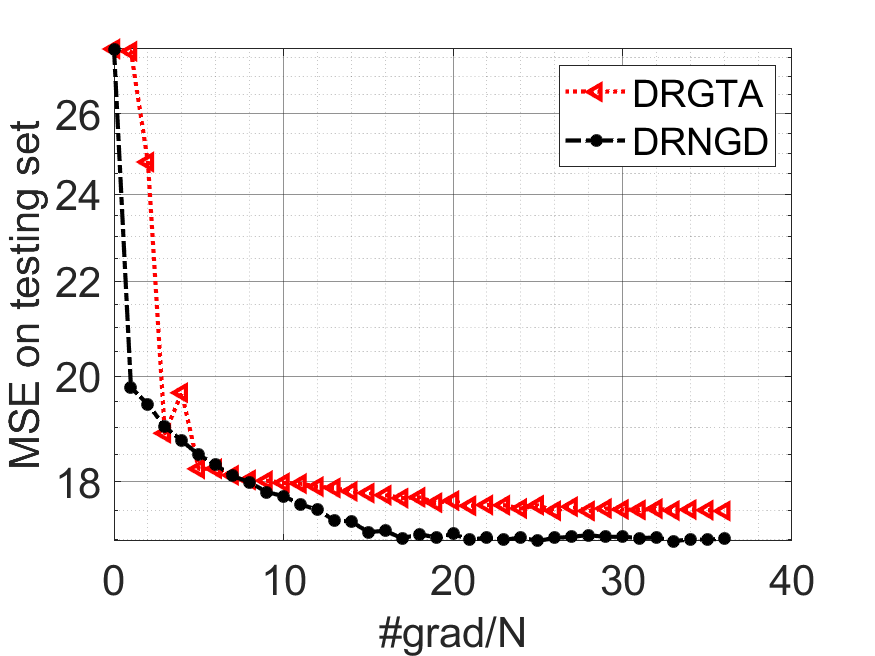}}
			\centerline{} \medskip
		\end{minipage}
	\end{center}
	\vskip -0.4in
	\caption{ Convergence performance of DRNGD for the LRMC problems. Left: random dataset with $N= d = 2000$, right: Jester dataset.}
	\label{fig:mc:random:N}
\end{figure*}

\begin{figure*}[!htb]
	\begin{center}
		\begin{minipage}[b]{0.22\linewidth}
			\centering
			\centerline{\includegraphics[width=\linewidth]{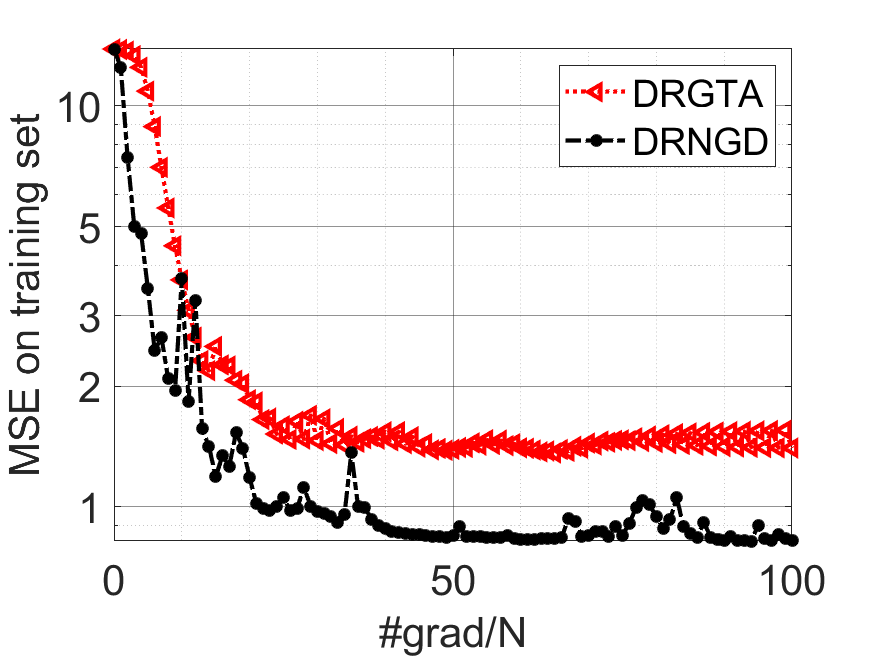}}
			\centerline{}\medskip
		\end{minipage}
		\begin{minipage}[b]{0.22\linewidth}
			\centering
			\centerline{\includegraphics[width=\linewidth]{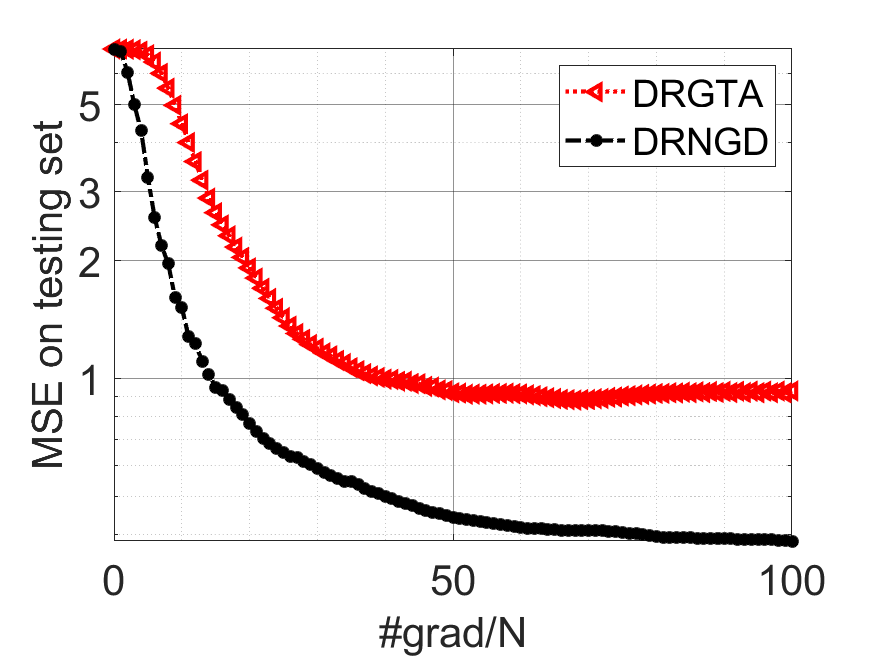}}
			\centerline{} \medskip
		\end{minipage}
	\begin{minipage}[b]{0.22\linewidth}
			\centering
			\centerline{\includegraphics[width=\linewidth]{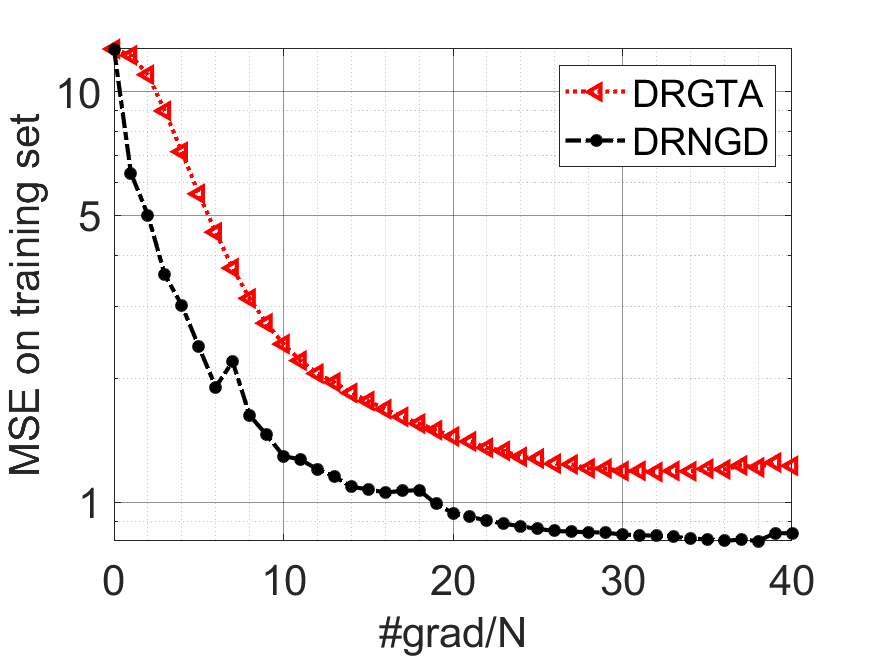}}
			\centerline{}\medskip
		\end{minipage}
		\begin{minipage}[b]{0.22\linewidth}
			\centering
			\centerline{\includegraphics[width=\linewidth]{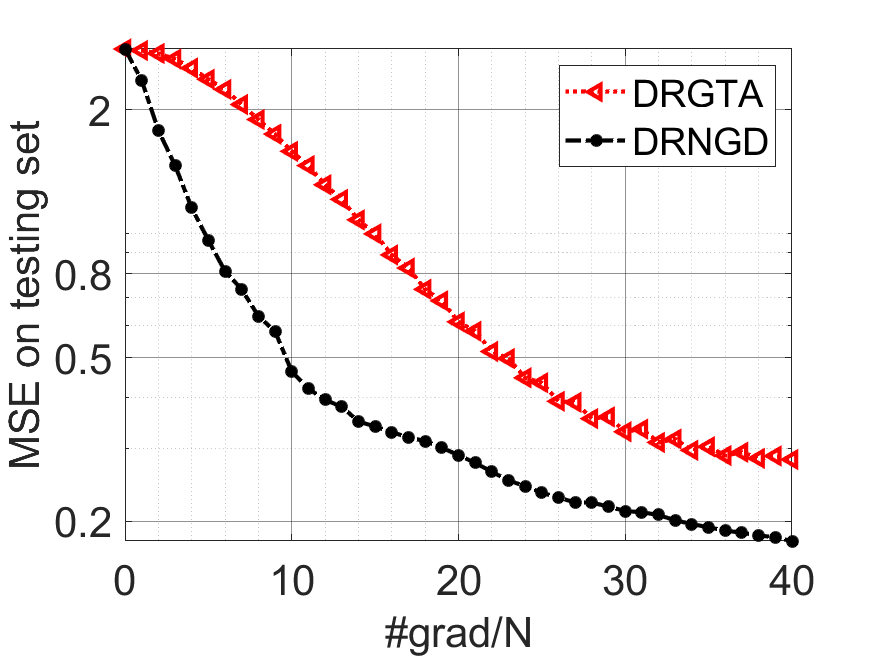}}
			\centerline{} \medskip
		\end{minipage}
	\end{center}
	\vskip -0.4in
	\caption{ Convergence performance of DRNGD for the LRMC problem. Left: MovieLens dataset, right: MovieLens100K dataset.}
	\label{fig:mc:ml}
\end{figure*}  

\paragraph{Comparisons with DRGTA.} Figure \ref{fig:sl-random} shows the comparison with DRGTA on the random set generated in the same as the previous setting. Here, the network topology is chosen as random topology.   It can be seen that our algorithm achieves better performance compared with DRGTA.



We also evaluate our algorithm on  two real-world multitask benchmark datasets: School and Sarcos.  
The School dataset consists of 15362 students
from 139 schools \cite{goldstein1991multilevel,argyriou2008convex}. The aim is to predict the performance (examination score) of the students from the schools, given the description of the schools and past records of the students. A total of $n = 28$ features are given. The examination score prediction problem for each school is considered as a task and there are $T=139$ tasks in total. The Sarcos data \footnote{ \url{http://gaussianprocess.org/gpml/data/}} relates to an inverse dynamics problem for a seven degrees-of-freedom Sarcos anthropomorphic robot arm, which contains 54484 examples with $n = 7$ features. We perform  80/20-train/test partitions. We run DRGND and DRGTA with $N = 4, d = 6$ and randomly split the $T$ tasks into $N$ agents. In Figures \ref{fig:sl:sarcos1} and \ref{fig:sl-school}, we report the performance of our algorithm compared with DRGTA in terms of training MSE and testing MSE. It is clear that our algorithm achieves better performance than DRGTA in all datasets. 

\revise{
For the comparisons with the centralized Riemannian natural gradient method \citep{hu2022riemannian}, we put them in Appendix \ref{sec:app2}. Our algorithm can achieve comparable MSE as RNGD although the convergence speed of our DRNGD is slightly slower than RNGD in terms of \#grad/N. this is due to the inexactness of constructing the Riemannian natural gradient direction. It should be emphasized that the centralized algorithm, RNGD, needs to collect all data into a single agent, which breaks data privacy.}

\subsection{Low-rank matrix completion}

In this subsection, we evaluate the performance of DRNGD on the LRMC problem \eqref{prob:lrmc} on different benchmark datasets. The low-rank matrix $X$ is generated via $X = UA$, where $U\in \mathbf{R}^{n\times d}, A\in\mathbb{R}^{d\times T}$ is two random matrices with i.i.d. standard normal distribution. The sample set $\Omega$ is randomly generated on $[n] \times [T]$ with the uniform distribution of $\frac{|\Omega|}{nT}$. The oversampling rate $OS$ for $X$ is defined as
\begin{equation*}
    OS: = \frac{|\Omega|}{(T+n-d)d}.
\end{equation*}
Note that $OS$ represents the difficulty of recovering matrix $X$, and it should be larger than 1.


\paragraph{Comparisons with DRGTA}We show comparisons with  \cite{chen2021decentralized}, the decentralized Riemannian gradient tracking algorithm (DRGTA). We consider a problem instance of size  $2000\times 2000$, $N=4$, $d = 5$, and OS $=3$. 
As shown in Figure \ref{fig:mc:random:N}, DRNGD outperforms DRGTA in terms of MSE on both the training set and testing set.  

We also evaluate the performance of DRNGD on LRMC with two real-world datasets: the Jester dataset \footnote{\url{http://eigentaste.berkeley.edu/user/index.php.}} and the Movielens dataset \footnote{\url{http://www.movielens.org.}}. The Jester dataset contains 4.1 million ratings for 100 jokes from 73,421 users. For numerical tests, 24,983 users  who have rated 36 or more jokes are chosen. We refer to \cite{ma2011fixed,chen2012matrix} for more details.  The MovieLens2 dataset contains movie rating data from users on different movies. In the following experiments, we choose the dataset
MovieLens100K consists of 100000 ratings from 943 users on 1682 movies, and MovieLens 1M consists of one million movie ratings from 6040 users on 3952 movies.  It can be observed that our DRNGD algorithm achieves better performance compared with DRGTA in both datasets.



\section{Concluding Remarks}\label{sec:con}
For a class of manifold-constrained and decentralized optimization problems, we develop a communication-efficient decentralized Riemannian natural gradient method based on the Kronecker-product approximations of the RFIM and the Riemannian EFIM. The communication cost can be even cheaper than those of Riemannian gradients. In the construction of the local RFIM, we present a switching condition between the local RFIM and the communicated one, which enables us to prove global convergence. Numerical results demonstrate the efficiency of our proposed method compared to the state-of-the-art ones. More experiments on deep learning problems with normalization layers are worth to be further investigated. It would be interesting to consider the generalizations in the federated learning setting.


\bibliographystyle{icml2023}
\bibliography{decen-NGD-manifold.bib}

\newpage
\onecolumn
\begin{appendix}
\begin{center}
{\Large \bf Appendix}

\end{center}
\vspace{-0.1in}
\par\noindent\rule{\textwidth}{1pt}
\setcounter{section}{0}

\renewcommand\thesection{\Alph{section}}
\section{The Stiefel manifold and the Grassmann manifold} \label{append:prem}
As introduced in \citep[Section 3.3.2 and Section 3.4.4]{absil2009optimization}, the Stiefel manifold ${\rm St}(n,d) := \{\Theta \in \R^{n\times d}: \Theta^\top \Theta = I \}$ and the Grassmann manifold ${\rm Gr}(n,d) = \{ {\rm all~}d{\rm-dimension ~ subspaces ~in~} \R^n\}$ are the embedded manifold and the quotient manifold, respectively. Let $\sim$ denote the equivalence relation on ${\rm St}(n,d)$ defined by 
$$
U \sim V \quad \Leftrightarrow \quad \operatorname{span}(U)=\operatorname{span}(V),
$$
where $\operatorname{span}(X)$ denotes the subspace $\left\{U a: a \in \mathbb{R}^d\right\}$ spanned by the columns of $U \in {\rm St}(n,d)$. The Grassmann manifold ${\rm Gr}(n,d)$ can be regarded as a quotient manifold of ${\rm St}(n,d)$ under the  equivalence relation $\sim$. It is obvious that its total space, i.e., ${\rm St}(n,d)$, is an embedded submanifold of $\R^{n\times d}$. 

\section{Proofs in Subsection \ref{subsec:con}} 
\subsection{Proof of Lemma \ref{lem:upper-bound-grass}} \label{append:lemma}
\begin{proof}[Proof of Lemma \ref{lem:upper-bound-grass}]
Due to the Grassmann constraint, we have $\varphi(V) = \varphi(VQ)$ for any $Q\in \R^{d\times d}$ with $Q^\top Q = QQ^\top = I$. Hence, $\varphi$ is a constant over $S(V):=\{VQ: Q \in \R^{d\times d}, \; Q^\top Q = QQ^\top = I\}$. The tangent space of $S(V)$ at the point $V$ is given by $\{V\Lambda: \Lambda + \Lambda^\top = \revise{\mathbf{0},~ \Lambda\in \mathbb{R}^{d\times d}}\}$. Then, by the definition of the directional derivative, it holds that
\[ \iprod{\nabla \varphi(V)}{V\Lambda} = 0, \]
for all $\Lambda \in \R^{d\times d}$ with $\Lambda + \Lambda^\top = 0$. This implies that \revise{the matrix $V^\top \nabla \varphi(V)$ is symmetric.} 

It follows from the $L_p$-Lipschitz continuity of $\nabla \varphi$ that
\be \label{eq:lsmooth} \varphi(U) \leq \varphi(V) + \iprod{\nabla \varphi(V)}{U-V} + \frac{L}{2}\|U-V\|^2. \ee
Note that $\grad \varphi(V) = (I-VV^\top) \nabla \varphi(V)$. Then,
\be \begin{aligned}
    & \iprod{\grad \varphi(V)}{U - V}  \\
    = & \iprod{\nabla \varphi(V)}{U - V} - \iprod{VV^\top \nabla \varphi(V)}{U-V} \\
    = & \iprod{\nabla \varphi(V)}{U-V} - \iprod{V^\top \nabla \varphi(V)}{V^\top (U-V)} \\
    = & \iprod{\nabla \varphi(V)}{U-V} - \frac{1}{2}\iprod{V^\top \nabla \varphi(V)}{V^\top (U-V) + (U-V)^\top V} \\
    = & \iprod{\nabla \varphi(V)}{U-V} - \frac{1}{2}\iprod{V^\top \nabla \varphi(V)}{(U-V)^\top(U-V)},
\end{aligned}
\ee
where the third equality is due to the symmetry of $V^\top \nabla \varphi(V)$ and the last equality is from $V^\top V = U^\top U = I$. It follows that
\be \label{eq:grad-ineq} \begin{aligned}
    & \iprod{\nabla \varphi(V)}{U-V} \\
    = & \iprod{\grad \varphi(V)}{U - V} + \frac{1}{2}\iprod{V^\top \nabla \varphi(x)}{(U-V)^\top(U-V)} \\
    \leq & \iprod{\grad \varphi(V)}{U - V} + \frac{C_g}{2} \|U-V\|^2.
\end{aligned}
\ee
Combining \eqref{eq:lsmooth} and \eqref{eq:grad-ineq}, we have \eqref{eq:upbound-grass} holds with $\hat{L} = L_g + C_g$. 

For the Riemannian gradient $(I-VV^\top) \nabla \varphi(V)$, it holds that
\[ \begin{aligned}
& \| (I-VV^\top) \nabla \varphi(V) - (I -UU^\top) \nabla \varphi(U) \| \\
= & \| (I -VV^\top) (\nabla \varphi(V) - \nabla \varphi(U)) + (UU^\top - VV^\top) \nabla \varphi(U) \|  \\
\leq & L \| U -V \| + \| U(U-V)^\top\nabla \varphi(U) + (U-V)V^\top \nabla \varphi(V)  \| \\
\leq & (L + 2C_g) \|U-V\|,
\end{aligned}
 \]
 where the first inequality is due to the Lipschitz continuity of $\nabla \varphi(V)$ over ${\rm St}(n,d)$ and the second inequality is from $ \| V \|_2 \leq 1$ for all $V \in {\rm St}(n,d)$. This completes the proof by setting $\tilde{L} = L + 2C_g$. 
\end{proof}

\subsection{Proof of Theorem \ref{thm:convergence}} \label{append:thm}
\begin{proof}[Proof of Theorem \ref{thm:convergence}]
One can prove the conclusion by following the arguments in \citep[Theorem 5.2]{chen2021decentralized}. Let us define
\[ \mathbf{G}_k:= \begin{bmatrix}
g_{1,k} \\
\vdots \\
g_{N,k} \\
\end{bmatrix}, \quad \mathbf{y}_k = \begin{bmatrix}
y_{1,k} \\
\vdots \\
y_{N,k}
\end{bmatrix}, \quad  \hat{y}_k:=\frac{1}{N} \sum_{i=1}^N y_{i,k}, \quad
\mathbf{d}_k = \begin{bmatrix}
d_{1,k} \\
\vdots \\
d_{N,k}
\end{bmatrix},
\quad \hat{d}_k = \frac{1}{N} \sum_{i=1}^N d_{i,k},
\]
\[
\hat{g}_k :=\frac{1}{N} \sum_{i=1}^N g_{i,k}, \quad \hat{\mathbf{G}}_k:= \mathbf{1}_N \otimes \hat{g}_k.  \]
It follows from the construction of $F_{i,k}$ in \eqref{eq:local-FIM} that 
\[ \|d_{i,k}\| \leq \frac{1}{\lambda}\|y_{i,k}\|, \quad \|\mathbf{d}_k\| \leq \frac{1}{\lambda} \|\mathbf{y}_k\|. \]
By the assumption $t \geq \lceil \log_{\sigma_2}\left( \frac{1}{2\sqrt{N}} \right) \rceil$ and following \citep[Lemma 5.1]{chen2021decentralized}, we have 
\be \label{eq:recru-y}\|\mathbf{y}_{k+1} - \hat{\mathbf{G}}_{k+1} \| \leq \sigma_2^t \| \mathbf{y}_k - \hat{\mathbf{G}}_k \| + \| \mathbf{G}_{k+1} - \mathbf{G}_k \| \ee
and 
\be \label{eq:recru-G} \|\mathbf{G}_{k+1} - \mathbf{G}_k\| \leq 2\alpha L_g\|\mathbf{\Theta}_k - \bar{\mathbf{\Theta}}_k\| + \beta / \lambda L_g\|\mathbf{y}_k\|. \ee
In addition, with Assumptions \ref{assum:graph} and \ref{assum:obj}, and small $\beta$ depending on $\delta_1,\delta_2, \sigma_2, \alpha, t, L_g, C_g$, it holds that $\|y_{i,k}\| \leq L_g +2C_g$ for all $i,k$ and $\mathbf{\Theta}_k \in \mathcal{N}$, and there exists a constant $C_1$ depending on $\delta_1,\delta_2, \sigma_2, \alpha, t$ such that 
\[ \frac{1}{N} \| \mathbf{\Theta}_k - \bar{\mathbf{\Theta}}_k \|^2 \leq C_1 (L_g+ 2C_g)^2 \beta^2 / \lambda^2. \]
Hence, $\Theta_k \in \mathcal{N}$ for all $k \geq 0$ if $\beta$ is small enough. From the Lipschitz continuity and the boundedness of ${\rm St}(n,d)$, we have
\[ \| \hat{A}_{i,k} -\hat{A}_{i,k-1}\| \leq 2C_a L_a\| \Theta_{i,k} - \Theta_{i,k-1}\|, \quad  \| \hat{B}_{i,k} -\hat{B}_{i,k-1}\| \leq 2C_b L_b\| \Theta_{i,k} - \Theta_{i,k-1}\|.   \]
Then, using a similar proof to \eqref{eq:recru-y}, one has 
\[ \| \bar{A}_{i,k+1} - \hat{A}_{i,k}  \| \leq \frac{1}{2} \|\bar{A}_{i,k} - \hat{A}_{k-1}\| + 2L_aC_a \| \Theta_{i,k+1} - \Theta_{i,k} \|. \]
By proof of induction and a small $\beta$ depending on $\sigma_2, t, L_a, C_a, \delta_1, \delta_2, \alpha, \lambda$, it holds $ \| \bar{A}_{i,k} \| \leq \kappa_a $ for some $\kappa_a > 0$.
Similarly, we have $\| \bar{B}_{i,k} \| \leq \kappa_b$ for some $\kappa_b > 0$. Hence, by the construction of $F_{i,k}$ in \eqref{eq:local-FIM}, there exists a positive constant $\kappa_F$ such that
\be \label{eq:bounded-F}  \lambda I \preceq F_{i,k} \preceq \kappa_F I.  \ee

Similar to \citep[Equations (E.4) and (E.5)]{chen2021decentralized}, there exists a $\rho_t \in (0,1)$ depending on $\alpha, \sigma_2, t$ such that
\be \label{eq:consensus} 
\|  \mathbf{\Theta}_{k+1} - \bar{\mathbf{\Theta}}_{k+1}  \| \leq \rho_t\| \mathbf{\Theta}_k - \bar{\mathbf{\Theta}}_k  \| + \beta/\lambda \|\mathbf{y}_k\|, \ee
and 
\be \label{eq:triangle} \| \mathbf{y}_k \| \leq \| \mathbf{y}_k -\hat{\mathbf{G}}_k \| + \| \hat{\mathbf{G}}_k \|. \ee

With the inequalities \eqref{eq:recru-y}, \eqref{eq:recru-G}, \eqref{eq:bounded-F}, \eqref{eq:consensus}, and \eqref{eq:triangle}, the key for proving the convergence is to establish the decrease on $\varphi(\bar{\Theta}_k)$. Note that \citep[Lemma E.2]{chen2021decentralized} gives the decrease based on the decentralized Riemannian gradient tracking method. Although we use the natural gradient direction, the decrease can be proved similarly. Here, let us present the sketch of the proofs. Define $H_{i,k} = F_{i,k}^{-1}$ and $\bar{H}_k = \frac{1}{N}\sum_{i=1}^N H_{i,k}$. For the term $\iprod{\hat{g}_k}{\hat{\theta}_{k+1} - \hat{\theta}_k}$, we have
\be \label{eq:b1}
\begin{aligned}
\iprod{\hat{g}_k}{\hat{\theta}_{k+1} - \hat{\theta}_k} = & \iprod{\hat{g}_k}{\hat{\theta}_{k+1} - \hat{\theta}_k - \beta \bar{H}_k \hat{g}_k + \beta \bar{H}_k \hat{g}_k} \\
\leq & -\beta/\kappa_F\| \hat{g}_k\|^2 + \underbrace{\iprod{\hat{g}_k}{\frac{1}{N}\sum_{i=1}^N \left[ \theta_{i,k+1} - (\theta_{i,k} - \beta d_{i,k} -\alpha P_{T_{\theta_{i,k}} \calM}\left(\sum_{j=1}^N w^t_{ij}\theta_{j,k} \right) \right]}}_{s_1} \\
& + \underbrace{\iprod{\hat{g}_k}{\frac{1}{N}\sum_{i=1}^N \left[\beta(\bar{H}_k y_{i,k} - d_{i,k})  - \alpha P_{T_{\theta_{i,k}} \calM}\left(\sum_{j=1}^N w^t_{ij}\theta_{j,k} \right)\right]}}_{s_2}.
\end{aligned}
\ee
By the Lipschitz-like property of $R_{\theta_{i,k}}$ (i.e., there exists a $M > 0$ such that $\| R_{\theta}(u) - \theta + u\|\leq M \|u\|^2$ for any $\theta \in {\rm St}(n,d)$ and $u \in T_{\theta} \calM$ \citep{boumal2019global}), we can bound $s_1$ by $ \mathcal{O}(\|\mathbf{\Theta}_k - \bar{\mathbf{\Theta}}_k\|^2) + \mathcal{O}(\beta^2\|\mathbf{y}_k\|^2)$ from above. For $s_2$, it holds
\[ \begin{aligned}
s_2 & = \iprod{\hat{g}_k}{\frac{1}{N}\sum_{i=1}^N \beta  (\bar{H}_k y_{i,k} - H_{i,k}y_{i,k}) } + \iprod{\hat{g}_k}{\frac{1}{N}\sum_{i=1}^N \beta  ( H_{i,k} y_{i,k} - d_{i,k} )} - \iprod{\hat{g}_k}{ \frac{1}{N}\sum_{i=1}^N \alpha P_{T_{\theta_{i,k}} \calM}\left(\sum_{j=1}^N w^t_{ij}\theta_{j,k} \right)} \\
& = \iprod{\hat{g}_k}{\frac{1}{N}\sum_{i=1}^N \beta  H_{i,k} (\hat{g}_k - y_{i,k}) } + \iprod{\hat{g}_k}{\frac{1}{N}\sum_{i=1}^N \beta P_{N_{\theta_{i,k}} \calM}(H_{i,k} y_{i,k})} - \iprod{\hat{g}_k}{ \frac{1}{N}\sum_{i=1}^N \alpha P_{T_{\theta_{i,k}} \calM}\left(\sum_{j=1}^N w^t_{ij}\theta_{j,k} \right)} \\
& \leq 2 \beta^2 \| \hat{g}_k\|^2 + \mathcal{O}(\| \mathbf{y}_k - \hat{\mathbf{g}}_k  \|^2) + \mathcal{O}(\|\mathbf{\Theta}_k - \bar{\mathbf{\Theta}}_k\|^2)  + \mathcal{O}(\beta^2\|\mathbf{y}_k\|^2),
\end{aligned}  \]
where $N_{\theta}\calM$ denotes the normal space at $\theta$ to $\calM$. Hence, 
\be \iprod{\hat{g}_k}{\hat{\theta}_{k+1} - \hat{\theta}_k} \leq - \beta/\kappa_F \|\hat{g}_k\|^2 + 2\beta^2 \|\hat{g}_k\|^2 + \mathcal{O}(\| \mathbf{y}_k - \hat{\mathbf{g}}_k  \|^2) + \mathcal{O}(\|\mathbf{\Theta}_k - \bar{\mathbf{\Theta}}_k\|^2)  + \mathcal{O}(\beta^2\|\mathbf{y}_k\|^2). \ee
Following the proof in \citep[Lemma E.2]{chen2021decentralized}, one has
\be \label{eq:decrease} \varphi(\bar{\Theta}_{k+1}) \leq  \varphi(\bar{\Theta}_{k}) - \left[ \frac{\beta}{\kappa_F} - (4L_g + 2)\beta^2 \right] \|\hat{g}_k\|^2 + \mathcal{O}(\| \mathbf{y}_k - \hat{\mathbf{g}}_k  \|^2) + \mathcal{O}(\|\mathbf{\Theta}_k - \bar{\mathbf{\Theta}}_k\|^2) + \mathcal{O}(\|\mathbf{\Theta}_{k+1} - \bar{\mathbf{\Theta}}_{k+1}\|^2) + \mathcal{O}(\beta^2\|\mathbf{y}_k\|^2). \ee
Applying \citep[Lemma 2]{xu2015augmented} to \eqref{eq:recru-y} leads to 
\[ \sum_{k=0}^K \| \mathbf{y}_k - \hat{\mathbf{g}}_k \|^2 \leq \mathcal{O}\left(\sum_{k=1}^K\|\mathbf{G}_{k} - \mathbf{G}_{k-1}\|^2\right) + \mathcal{O}(1) \leq \mathcal{O}\left( \sum_{k=0}^K \|\mathbf{\Theta}_k - \bar{\mathbf{\Theta}}_k\|^2 \right) + \mathcal{O}\left(\beta^2\sum_{k=0}^K\|\mathbf{y}_k\|^2\right) + \mathcal{O}(1). \]
Due to \eqref{eq:consensus}, we have
\[ \sum_{k=0}^K \|\mathbf{\Theta}_k - \bar{\mathbf{\Theta}}_k\|^2 \leq   \mathcal{O}\left(\beta^2\sum_{k=0}^K\|\mathbf{y}_k\|^2\right) + \mathcal{O}(1). \]
Then, summing \eqref{eq:decrease} over $k=0,\ldots, K$ gives
\be \label{eq:decrease2} 
 \varphi(\bar{\Theta}_{K+1}) \leq  \varphi(\bar{\Theta}_{k}) - \left[ \frac{\beta}{\kappa_F} - (4L_g + 2)\beta^2 \right] \|\hat{g}_k\|^2 + \mathcal{O}(\beta^2\|\mathbf{y}_k\|^2) + \mathcal{O}(1). \ee
 By a similar technique \citep[Theorem 5.2]{chen2021decentralized} on connecting $\|\mathbf{y}_k\|$ and $\|\hat{g}_k\|$, we conclude 
 $$
\begin{aligned}
& \min _{k \leq K} \frac{1}{N}\left\|\mathbf{\Theta}_k-\bar{\mathbf{\Theta}}_k\right\|^2 \leq \mathcal{O}(\frac{\beta}{K}), \\
& \min_{k \leq K}\left\|\grad \varphi\left(\bar{\Theta}_k\right)\right\|^2 \leq \mathcal{O}(\frac{1}{\beta K}),
\end{aligned}
$$
for sufficient small $\beta$ depending on $\sigma_2, t, L_g, C_g, L_a, L_b, C_a, C_b, \lambda, \delta_1, \delta_2$, and $\alpha$, and the constants in $\mathcal{O}$ are related to $\alpha$, $\sigma_2, t$, $\varphi(\bar{\Theta}_0),$ $\inf_{\Theta \in \calM} \varphi(\Theta)$,  $L_g, C_g, L_a, L_b, C_a, C_b, \lambda, \delta_1$, and $\delta_2$.
 
\end{proof}

\section{Additional comparisons with the centralized Riemannian natural gradient method}\label{sec:app2}

\revise{We test the numerical performance of our method compared with the centralized Riemannian natural gradient method (RNGD) \citep{hu2022riemannian}. Figure \ref{fig:mc:sl2} shows the results on the low-dimension subspace learning problem with Sarcos dataset. We conclude that our algorithm can achieve comparable MSE as RNGD although the convergence speed of our DRNGD is slightly slower than RNGD in terms of \#grad/N. this is due to the inexactness of constructing the Riemannian natural gradient direction. It should be emphasized that the centralized algorithm, RNGD, needs to collect all data into a single agent, which breaks data privacy.}
\begin{figure*}[!htb]
	\begin{center}
		\begin{minipage}[b]{0.4\linewidth}
			\centering
			\centerline{\includegraphics[width=\linewidth]{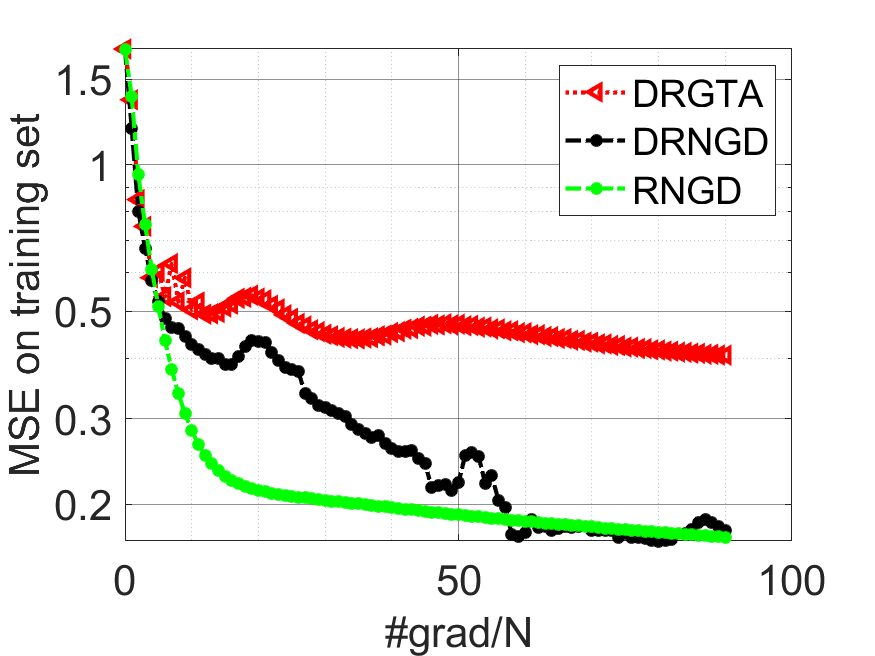}}
			\centerline{}\medskip
		\end{minipage}
		\begin{minipage}[b]{0.4\linewidth}
			\centering
			\centerline{\includegraphics[width=\linewidth]{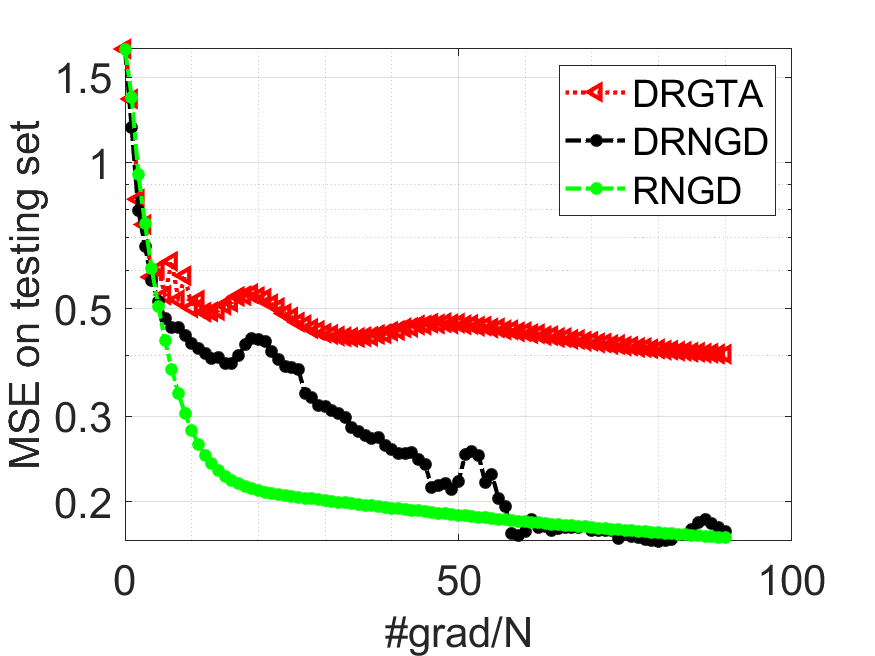}}
			\centerline{} \medskip
		\end{minipage}
	\end{center}
	\vskip -0.4in
	\caption{ Numerical results on the low-dimension subspace learning problem with Sarcos dataset.}
	\label{fig:mc:sl2}
\end{figure*}  

\end{appendix}

\end{document}